\def\theequation {\thesection.\arabic{equation}}
\makeatletter\@addtoreset {equation}{section}\makeatother
\newtheorem{theorem}{Theorem}
\newtheorem{lemma}{Lemma}
\theoremstyle{remark}
\newtheorem{remark}{Remark}
\theoremstyle{definition}
\newtheorem{definition}{Definition}
\theoremstyle{corollary}
\newtheorem{corollary}{Corollary}
\newcommand{\Z}{\mathbb{Z}}
\renewcommand{\d}{{\rm d}}
\DeclareMathOperator{\sech}{sech}
\DeclareMathOperator{\Real}{Re}
\DeclareMathOperator{\C}{\mathbb{C}}
\DeclareMathOperator{\R}{\mathbb{R}}
\DeclareMathOperator{\dom}{\rm dom}
\DeclareMathOperator{\ran}{\rm ran}
\begin{document}

\title[Spectral instability of the peaked periodic wave]{\bf Spectral instability
of the peaked periodic wave \\ in the reduced Ostrovsky equations}

\author{Anna Geyer}
\address[A. Geyer]{Delft Institute of Applied Mathematics, Faculty Electrical Engineering, Mathematics and
Computer Science, Delft University of Technology, Mekelweg 4, 2628 CD Delft, The Netherlands}
\email{A.Geyer@tudelft.nl}

\author{Dmitry E. Pelinovsky}
\address[D. Pelinovsky]{Department of Mathematics and Statistics, McMaster University,
Hamilton, Ontario, Canada, L8S 4K1}
\email{dmpeli@math.mcmaster.ca}
\address[D. Pelinovsky]{Department of Applied Mathematics, Nizhny Novgorod State Technical University, 24 Minin street, 603950 Nizhny Novgorod, Russia}

\keywords{Peaked periodic wave, reduced Ostrovsky equation, spectral instability}

\thanks{DEP acknowledges a financial support from the State task program in the sphere
of scientific activity of Ministry of Education and Science of the Russian Federation
(Task No. 5.5176.2017/8.9) and from the grant of President of Russian Federation
for the leading scientific schools (NSH-2685.2018.5).}

\begin{abstract}
We show that the peaked periodic traveling wave of the reduced Ostrovsky equations with quadratic
and cubic nonlinearity is spectrally unstable in the space of square integrable periodic functions
with zero mean and the same period. The main novelty of our result is that
the spectrum of a linearized operator at the peaked periodic wave completely covers a closed vertical strip of the complex plane.
In order to obtain this instability, we prove an abstract result on spectra of operators under compact perturbations.
This justifies the truncation of the linearized operator at the peaked periodic wave to its
differential part for which the spectrum is then computed explicitly.
\end{abstract}

\date{\today}
\maketitle

\section{Introduction}
\label{sec-intro}

The Ostrovsky equation with the quadratic nonlinearity was originally derived by L.A. Ostrovsky
\cite{Ostrov} to model small-amplitude long waves in a rotating fluid of finite depth.
The same approximation was extended to internal gravity waves
in which case the underlying equation includes the cubic nonlinearity
and is referred to as the modified Ostrovsky equation \cite{Grimshaw1,Grimshaw2,Ostrov-review}.
When the high-frequency dispersion is neglected, the reduced Ostrovsky equation can be written in the form
\begin{equation}
\label{eq-redOst}
    u_t + u u_x = \partial_x^{-1} u,
\end{equation}
whereas the reduced modified Ostrovsky equation takes the form
\begin{equation}
\label{eq-redmodOst}
    u_t + u^2 u_x = \partial_x^{-1} u.
\end{equation}
For both equations (\ref{eq-redOst}) and (\ref{eq-redmodOst}),
periodic waves of the normalized period $2 \pi$ are considered in the Sobolev space of
$2\pi$-periodic functions denoted by $H^s_{\rm per} \equiv H^s_{\rm per}(-\pi,\pi)$, for some $s \geq 0$.
The subspace of $H^s_{\rm per}$ for $2\pi$-periodic functions with zero mean is denoted by $\dot{H}^s_{\rm per}$.
The operator $\partial_x^{-1}: \dot{H}^s_{\rm per} \rightarrow \dot{H}^{s+1}_{\rm per}$ denotes the anti-derivative
with zero mean.

Local well-posedness of the Cauchy problem for the reduced Ostrovsky equations (\ref{eq-redOst}) and (\ref{eq-redmodOst})
can be shown in $\dot{H}^s_{\rm per}$ with $s > \frac{3}{2}$ \cite{LPS1,SSK10}.
For sufficiently large initial data, the local solutions break in finite time,
similar to the inviscid Burgers equation \cite{JG,GH,LPS1}.
For sufficiently small initial data, the local solutions are continued for all times \cite{GP}.

Traveling wave solutions of the reduced Ostrovsky equations are of the form $u(x,t)=U(x-ct)$,
where $z=x-ct$ is the traveling wave coordinate and $c$ is the wave speed. The wave profile
$U(z)$ satisfies the integral-differential equation in the form
\begin{equation}
\label{ODE}
\left\{ \begin{array}{l} \left[ c - U(z)^p \right] U'(z) + (\partial_z^{-1} U)(z) = 0, \quad
\mbox{\rm for every } \; z \in (-\pi,\pi) \;\; \mbox{\rm with } \; U(z) \neq c, \vspace{0.5em}\\
U(-\pi) = U(\pi), \quad \int_{-\pi}^{\pi} U(z) dz = 0, \end{array} \right.
\end{equation}
where $p = 1$ for (\ref{eq-redOst}) and $p = 2$ for (\ref{eq-redmodOst}).

Smooth solutions to the boundary-value problem (\ref{ODE}) exist for $c \in (1,c_*)$, where
$c_*$ is uniquely defined, see \cite{GP17} (and \cite{Bruell18} for a generalization).
For $c \in (1,c_*)$ smooth solutions satisfy $U(z) < c$ for every $z \in [-\pi,\pi]$
and the boundary-value problem (\ref{ODE}) can be equivalently rewritten
in the differential form
\begin{equation}
\label{ode-1}
\left\{ \begin{array}{l} \frac{d}{dz} \left[( c - U(z)^p)\frac{dU}{dz} \right]  + U(z) = 0, \quad
\mbox{\rm for every } \; z \in [-\pi,\pi], \\
U(-\pi) = U(\pi), \quad U'(-\pi) = U'(\pi). \end{array} \right.
\end{equation}
At $c = c_*$, solutions to the boundary-value problem (\ref{ODE}) are
peaked at the points $z = \pm \pi$, where $U(\pm \pi) = c_*$.
Uniqueness and Lipschitz continuity of the peaked solutions to the boundary-value problem (\ref{ODE}) were proven
in \cite{GeyPel18} for $p=1$ (see \cite{Bruell18,Mats1} for a generalization). We denote
this unique (up to translation) peaked solution by $U_*(z)$.

For $p = 1$, the peaked wave $U_*(z)$ exists at the wave speed $c_* = \tfrac{\pi^2}{9}$ and
is given by
\begin{equation}
\label{eq-peakedwave}
  U_*(z) = \frac{1}{18} (3 z^2 - \pi^2), \quad \text{ for } z \in [-\pi,\pi],
\end{equation}
periodically continued beyond $[-\pi,\pi]$. It was already obtained in the original paper \cite{Ostrov}.
For $p = 2$, the peaked wave $U_*(z)$ exists
at the wave speed $c_* = \tfrac{\pi^2}{8}$ and is given by
\begin{equation}
\label{eq-peakedwave-mod}
  U_*(z) = \frac{1}{\sqrt{2}} \left(|z| - \frac{\pi}{2} \right), \quad \text{ for } z \in [-\pi,\pi],
\end{equation}
periodically continued beyond $[-\pi,\pi]$, see \cite{Nik}.
In both cases, $U_* \in \dot{H}^s_{\rm per}$
for $s < \frac{3}{2}$ with a finite jump discontinuity of the first derivative
at $z = \pm \pi$ for (\ref{eq-peakedwave}) and at $z = 0,\pm \pi$ for (\ref{eq-peakedwave-mod}).

Smooth periodic waves of the quasi-linear differential equation in (\ref{ode-1}) can be
obtained equivalently from a semi-linear differential equation by
means of the following change of coordinates \cite{JP,Hakkaev1,Hakkaev2}:
\begin{equation}
\label{coor-transf}
U(z) = {\bf u}(\xi),\quad    z = \int_0^{\xi} \left[ c - {\bf u}(s)^p \right] d s.
\end{equation}
The smooth periodic waves with profile ${\bf u}$ satisfy the differential equation
\begin{equation}
\label{ode-2}
\frac{d^2 {\bf u}}{d \xi^2} + \left[ c - {\bf u}(\xi)^p \right] {\bf u}(\xi) = 0.
\end{equation}
Although all periodic solutions of differential equation (\ref{ode-2}) are smooth, the coordinate transformation
(\ref{coor-transf}) fails to be invertible if ${\bf u}(\xi) = c$ for some $\xi$. Singularities
in the coordinate transformation are related to the appearance of the peaked solutions
in the boundary-value problem (\ref{ODE}).

Spectral stability of smooth periodic waves with respect to perturbations of the same period
was proven both for (\ref{eq-redOst}) and (\ref{eq-redmodOst}) in \cite{GP17,Hakkaev2}.
The analysis of \cite{GP17} relies on the standard variational formulation of the periodic waves
as critical points of energy subject to fixed momentum. The analysis of \cite{Hakkaev2}
relies on the coordinate transformation (\ref{coor-transf}), which reduces
the spectral stability problem of the form $\partial_x L v = \lambda v$
with the self-adjoint operator
$L = c - U^p + \partial_z^{-2}$
to the spectral problem of the form $M {\bf v} = \lambda \partial_{\xi} {\bf v}$ with the self-adjoint operator
$M = c - {\bf u}^p + \partial_{\xi}^2$.
The spectral problem $M {\bf v} = \lambda \partial_{\xi} {\bf v}$ has been studied before in \cite{Stefanov}
(see also \cite{Kostenko} for a generalization). Orbital stability of smooth periodic waves with respect to perturbations
of any period multiple to the wave period was proven in \cite{JP} by using higher-order conserved quantities
of the reduced Ostrovsky equations (\ref{eq-redOst}) and (\ref{eq-redmodOst}).

The peaked periodic waves are, informally speaking, located at the boundary between global and breaking solutions
in the reduced Ostrovsky equations. If the initial data $u_0$ is smooth, it was shown
that global solutions of (\ref{eq-redOst}) exist if $m_0(x) := 1 - 3 u_0''(x) > 0$ for every $x$
and wave breaking occurs if $m_0(x)$ is sign-indefinite \cite{GH,GP}, whereas
global solutions of (\ref{eq-redmodOst}) exist if $m_0(x) := 1 - \sqrt{2} |u_0'(x)| > 0$ for every $x$
and wave breaking occurs if $m_0(x)$ is sign-indefinite \cite{JG}. Substituting $U_*$ instead of $u_0$
yields $m_0(x) = 0$ almost everywhere except at the peaks. Thus, it is natural to expect that
the peaked periodic waves are unstable in the time evolution of the reduced Ostrovsky equations.

In  \cite{GeyPel18} we proved that the unique peaked solution \eqref{eq-peakedwave} of the
reduced Ostrovsky equation (\ref{eq-redOst}) is linearly unstable
with respect to square integrable perturbations with zero mean and the same period.
This was done by obtaining sharp bounds on the exponential growth of the $L^2$ norm of the
perturbations in the linearized time-evolution problem $v_t = \partial_z L v$. No claims regarding
the spectral instability of the peaked periodic wave were made in \cite{GeyPel18}.
In \cite{Hakkaev2}, explicit solutions of the spectral stability problem $M {\bf v} = \lambda \partial_{\xi} {\bf v}$ 
were constructed, but since this construction violated the periodic boundary conditions on the perturbation term,
it did not provide an answer to the spectral stability question.

The main goal of this paper is to show that {\em the peaked periodic wave $U_*$ is spectrally unstable
with respect to square integrable perturbations with zero mean and the same period}.
We achieve this for both versions of the reduced Ostrovsky equations (\ref{eq-redOst}) and (\ref{eq-redmodOst})
with the peaked periodic waves $U_*$ given in (\ref{eq-peakedwave}) and (\ref{eq-peakedwave-mod}), respectively.
We discover {\em an unusual instability of the peaked periodic wave:} the spectrum of the linearized operator
$A = \partial_z L$ in the space of $2\pi$-periodic mean-zero functions   completely covers a closed vertical strip
of the complex plane, as depicted in Figure \ref{fig-spectrum} for the reduced Ostrovsky equation (\ref{eq-redOst}).
The right boundary of this vertical strip with ${\rm Re}(\lambda) = \frac{\pi}{6}$
coincides with the sharp growth rate of the exponentially growing perturbations obtained in \cite{GeyPel18}
for the peaked wave $U_*$ given by (\ref{eq-peakedwave}).
The vertical strip remains invariant when  the spectrum of $A$ is defined in the space of subharmonic and localized  perturbations, see Remark \ref{rem_FloquetBloch}.

\begin{figure}[h!]
\center
\includegraphics[scale=0.8]{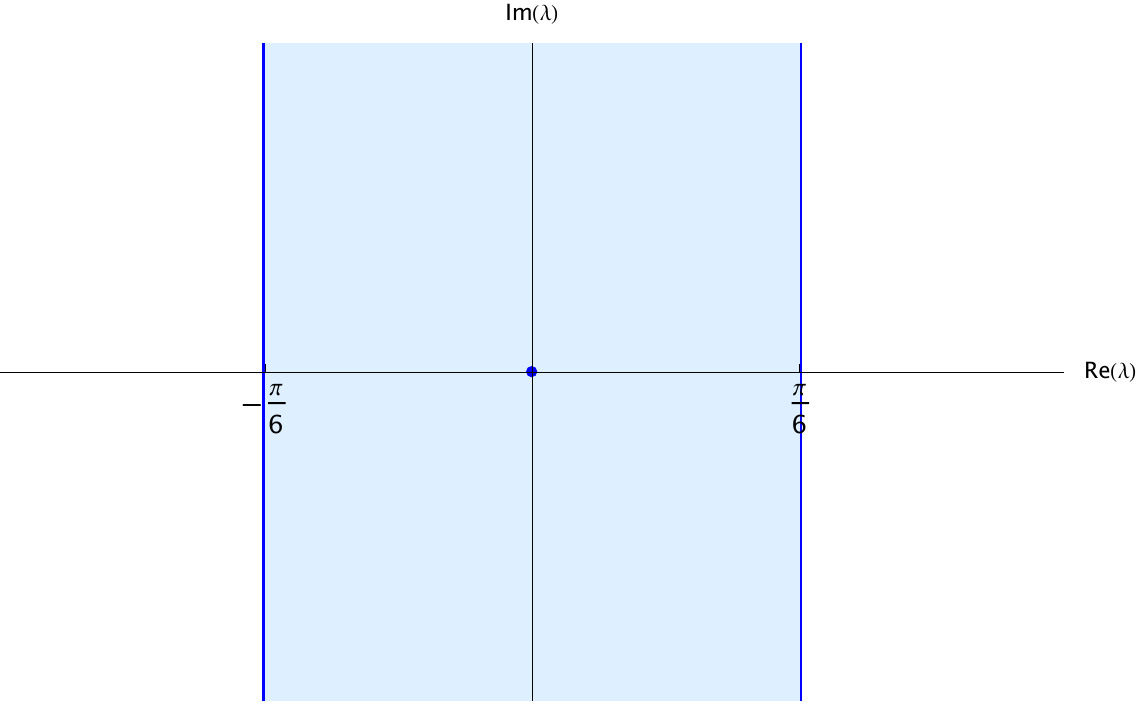}
\caption{The spectrum of the linearized operator $A$ at the peaked periodic wave $U_*$ given by (\ref{eq-peakedwave})
completely covers a closed vertical strip in the complex plane with zero being the only eigenvalue.
This shows that the peaked wave is spectrally unstable with respect to co-periodic perturbations.}
\label{fig-spectrum}
\end{figure}

Let us recall the following standard definition (see Definition 6.1.9 in \cite{BS18}).

\begin{definition}
Let $A$ be a linear operator on a Banach space $X$ with $\dom(A) \subset X$.
The complex plane $\mathbb{C}$ is decomposed into the following two sets:
\begin{enumerate}
    \item The resolvent set
    $$
    \rho(A) =  \left\{\lambda \in \C : \;\;
     \ker(A-\lambda I) = \{0\}, \;\; \ran(A-\lambda I) = X, \;\;  (A-\lambda I)^{-1}: X \to X \text{ is bounded} \right\}.
    $$
    \item The spectrum
    $$
    \sigma(A) =\C\setminus \rho(A),
    $$
    which is further decomposed into the following three disjoint sets:
    \begin{enumerate}
        \item the point spectrum
        $$\sigma_{\rm p}(A) = \{ \lambda \in \sigma(A) : \;\; \ker(A-\lambda I) \neq \{0\}\},$$
        \item the residual spectrum
        $$\sigma_{\rm r}(A) = \{ \lambda \in \sigma(A) : \;\; \ker(A-\lambda I) = \{0\}, \;\; \ran(A-\lambda I) \neq X \},$$
        \item the continuous spectrum
        $$\sigma_{\rm c}(A) = \{ \lambda \in \sigma(A) : \;\; \ker(A-\lambda I) = \{0\}, \;\; \ran(A-\lambda I) = X, \;\;
        (A-\lambda I)^{-1} : X \to X \text{ is unbounded}\}.$$
    \end{enumerate}\medskip
\end{enumerate}
\label{def-spectrum}
\end{definition}

In order to prove the spectral instability of the peaked periodic waves, we proceed as follows.
We first show that the point spectrum of the linearized operator $A$ consists of only the zero eigenvalue,
see Lemma \ref{lem-spectrum-A}. We then observe that $A$ is the sum of the linearization $A_0$ of
the quasi-linear part of the equation and a non-local term, which we may view as a compact perturbation $K$.
The truncated spectral problem for $A_0$ is then transformed to a problem on the line by a change coordinates
in Lemma \ref{lem-tech}. This facilitates the explicit computation of the spectrum of $A_0$ in
Lemmas \ref{lem-spectrumB0c} and \ref{lem-spectrumB0}. Finally, we  justify the truncation of
the linearized operator to its differential part by verifying the assumptions of the  following
abstract result, which is proven in the appendix.

\begin{theorem}
\label{thm-spectrumA0=A}
Let $A : {\rm dom}(A) \subset X \to X$ and $A_0 : {\rm dom}(A_0) \subset X \to X$ be
linear operators on Hilbert space $X$ with the same domain ${\rm dom}(A_0) = {\rm dom}(A)$ such
that $A - A_0 = K$ is a compact operator in $X$. Assume that the intersections
$\sigma_{\rm p}(A) \cap \rho(A_0)$ and $\sigma_{\rm p}(A_0) \cap \rho(A)$ are empty. Then,
$\sigma(A) = \sigma(A_0)$.
\end{theorem}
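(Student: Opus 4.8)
The plan is to establish the two inclusions $\sigma(A)\subseteq\sigma(A_0)$ and $\sigma(A_0)\subseteq\sigma(A)$ separately; the second follows from the first by symmetry, since $A_0-A=-K$ is again compact, the domains coincide, and the pair of hypotheses on the point spectra is symmetric in $A$ and $A_0$. So it suffices to prove that $\rho(A_0)\subseteq\rho(A)$, and I would in fact prove the slightly weaker statement $\rho(A_0)\subseteq\rho(A)\cup\sigma_{\rm p}(A)$ and then eliminate the second alternative using the assumption $\sigma_{\rm p}(A)\cap\rho(A_0)=\varnothing$.

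The mechanism is a factorization of the resolvent pencil. Fix $\lambda\in\rho(A_0)$, so that $(A_0-\lambda I)^{-1}:X\to\dom(A_0)$ is bounded. Since $\dom(A)=\dom(A_0)$ and $A-A_0=K$, for every $x\in\dom(A_0)$ one has
$$ (A-\lambda I)x = (A_0-\lambda I)x + Kx = \bigl(I + K(A_0-\lambda I)^{-1}\bigr)(A_0-\lambda I)x . $$
Set $S:=K(A_0-\lambda I)^{-1}$, which is a compact operator on all of $X$, being the product of a compact operator and a bounded one. I would then invoke the Fredholm alternative (Riesz--Schauder theory) for $I+S$: either $I+S$ is a bijection of $X$ with bounded inverse, or $\ker(I+S)\neq\{0\}$. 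In the first case $(A-\lambda I)^{-1}=(A_0-\lambda I)^{-1}(I+S)^{-1}$ is a bounded operator from $X$ onto $\dom(A)$, so $\lambda\in\rho(A)$. In the second case, choose $y\in\ker(I+S)$ with $y\neq 0$ and put $x:=(A_0-\lambda I)^{-1}y$; since $(A_0-\lambda I)^{-1}$ is injective, $x\neq 0$ and $x\in\dom(A_0)=\dom(A)$, while the factorization gives $(A-\lambda I)x=(I+S)y=0$, so $\lambda\in\sigma_{\rm p}(A)$. This proves $\rho(A_0)\subseteq\rho(A)\cup\sigma_{\rm p}(A)$; intersecting with $\sigma_{\rm p}(A)\cap\rho(A_0)=\varnothing$ gives $\rho(A_0)\subseteq\rho(A)$, hence $\sigma(A)\subseteq\sigma(A_0)$. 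Repeating the argument with $A$ and $A_0$ interchanged (using $A-A_0=-K$ and $\sigma_{\rm p}(A_0)\cap\rho(A)=\varnothing$) yields the reverse inclusion, and therefore $\sigma(A)=\sigma(A_0)$.

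I do not expect a serious obstacle here: the argument is short once the factorization is written down. The only points needing care are that $S$ is genuinely compact on the whole space $X$ (so that the Fredholm alternative applies, rather than merely on a subspace or range), and that the algebraic identities above are asserted only on the common domain $\dom(A_0)=\dom(A)$, so that no unbounded operators are ever composed in an ill-defined way. Conceptually, the hypotheses on the point spectra are exactly what is needed to rule out the single phenomenon a compact perturbation can introduce: an isolated eigenvalue of $A$ lying inside $\rho(A_0)$ (or vice versa).
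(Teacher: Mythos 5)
Your proof is correct, and it rests on the same engine as the paper's: compactness of the product of $K$ with a resolvent, the Riesz--Schauder (Fredholm) alternative, the hypotheses on the point spectra used exactly to exclude the eigenvalue branch, and symmetry in $A \leftrightarrow A_0$ supplying the second inclusion. The organization, however, is genuinely different. The paper argues by contradiction: it assumes $\lambda \in \sigma(A_0) \cap \rho(A)$, works with the compact operator $(A-\lambda I)^{-1}K$, and in the invertible branch of the alternative must split $\sigma(A_0)$ into its point, residual and continuous parts, deriving a separate contradiction in each case. You instead fix $\lambda \in \rho(A_0)$, factorize the pencil as $(A-\lambda I) = \bigl(I + K(A_0-\lambda I)^{-1}\bigr)(A_0-\lambda I)$ on the common domain, and read off directly that either $(A-\lambda I)^{-1} = (A_0-\lambda I)^{-1}(I+S)^{-1}$ is bounded, so $\lambda \in \rho(A)$, or $\ker(I+S)$ produces an eigenvector of $A$, so $\lambda \in \sigma_{\rm p}(A)$, which is excluded by the hypothesis $\sigma_{\rm p}(A)\cap\rho(A_0)=\emptyset$. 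This yields $\rho(A_0) \subseteq \rho(A)$ without ever distinguishing residual from continuous spectrum, which is a real streamlining of the paper's case (i). The only point worth making explicit in your write-up is that injectivity of $A-\lambda I$ in the first branch follows from the factorization (both factors are injective), so that writing $(A-\lambda I)^{-1}$ and verifying the three conditions in the paper's definition of the resolvent set is legitimate; with that spelled out, your argument is complete.
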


A similar instability with the spectrum lying in a vertical strip was discovered  in \cite{ShvLat03}  in the context of linearization around double periodic steady state solutions of the 2D Euler equations.

The proof of nonlinear instability of the peaked periodic waves is still open for the reduced Ostrovsky equations
(\ref{eq-redOst}) and (\ref{eq-redmodOst}). One of the main obstacles for nonlinear stability analysis is
the lack of well-posedness results for initial data in $\dot{H}^s_{\rm per}$ with $s < \frac{3}{2}$, which would
include the peaked periodic waves $U_*$ given by (\ref{eq-peakedwave}) and (\ref{eq-peakedwave-mod}). Another obstacle is the
discrepancy between the domain of the linearized operator $A = \partial_z L= \partial_z (c_* - U_*^p) + \partial_z^{-1}$ in $\dot{L}^2_{\rm per}$
and the Sobolev space $\dot{H}^1_{\rm per}$: while the former allows finite jumps of perturbations at the peaks,
the latter requires continuity of perturbations across the peaks,
see Remark \ref{rem-domain}. Because of a similar discrepancy, it is not clear if the Cauchy problem
for perturbations of the peaked periodic waves in the reduced Ostrovsky equation can be solved
in the domain of the iterated linearized operator $A^n$, $n \in \mathbb{N}$, which is again larger
than the Sobolev space $\dot{H}^n_{\rm per}$ of higher regularity, see Remark \ref{rem-Cauchy}.

The paper is organized as follows. The linearized operator $A$ is studied in Section 2 where
the main results for the peaked periodic waves $U_*$ given by (\ref{eq-peakedwave}) and (\ref{eq-peakedwave-mod})
are formulated. The proofs of the main results are contained in Section 3 and 4.
The appendix contains the proof of Theorem \ref{thm-spectrumA0=A}.

\section{Main results}

Linearizing (\ref{eq-redOst}) or (\ref{eq-redmodOst}) about the peaked traveling wave
$U_*(x-c_*t)$ with the perturbation $v(t,x-c_*t)$ yields an evolution problem of the form
\begin{equation}
\label{eq-linearization}
     v_t  = A v,
\end{equation}
where the operator $A : {\rm dom}(A) \subset  \dot L^2_{\rm per} \to \dot L^2_{\rm per}$ is defined by
\begin{equation}
\label{eq-L}
(Av)(z) \coloneqq \partial_z \left[ (c_* - U_*(z)^p) v(z)\right]  + \partial_z^{-1}v (z), \quad z \in (-\pi,\pi)
\end{equation}
with maximal domain
\begin{equation}
\label{dom-A}
{\rm dom}(A) = \left\{ v \in  \dot L^2_{\rm per} : \quad \partial_z  \left[ (c_* - U_*^p) v\right]  \in  \dot L^2_{\rm per} \right\},
\end{equation}
where either $p = 1$ for (\ref{eq-redOst}) or $p = 2$ for (\ref{eq-redmodOst}).

The linearized operator \eqref{eq-linearization} can be written as  $A = A_0 + K$, where the truncated operator
$A_0 : {\rm dom}(A_0) \subset \dot{L}^2_{\rm per} \to \dot L^2_{\rm per}$,
 is defined by
\begin{equation}
\label{eq-L1}
(A_0 v)(z) \coloneqq \partial_z \left[ (c_* - U_*(z)^p) v(z)\right], \quad z \in (-\pi,\pi)
\end{equation}
with the same domain ${\rm dom}(A_0) = {\rm dom}(A)$ and $K \coloneqq \partial_z^{-1}$ 
is a compact (Hilbert-Schmidt) operator in $\dot{L}^2_{\rm per}$ with spectrum 
$\sigma(K) = \{ i n^{-1}, \; n\in\Z\setminus\{0\}\}$.

By using Definition \ref{def-spectrum}, we introduce the following notion of spectral
stability for the traveling wave $U_*$.

\begin{definition}
\label{def-spectral-stability}
The traveling wave $U_*$ is said to be spectrally stable if $\sigma(A) \subset i \mathbb{R}$.
Otherwise, it is said to be spectrally unstable.
\end{definition}

The following two theorems present the main results of this paper.

\begin{theorem}
\label{thm-mainresult}
Consider the operator $A$ given by (\ref{eq-L}) on $\dot L^2_{\rm per}$
with $\dom(A)$ given by (\ref{dom-A}) for $p = 1$ and $U_*$ as in (\ref{eq-peakedwave}). Then,
\begin{equation}
\label{spectrum}
    \sigma(A) = \left\{ \lambda \in\C : \;\; -\frac{\pi}{6}\leq \Real(\lambda)\leq \frac{\pi}{6} \right\}.
\end{equation}
Consequently, the peaked wave $U_*$ is spectrally unstable in the reduced Ostrovsky equation
(\ref{eq-redOst}).
\end{theorem}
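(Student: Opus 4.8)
I would prove Theorem \ref{thm-mainresult} by combining the decomposition $A = A_0 + K$ with the abstract spectral comparison result Theorem \ref{thm-spectrumA0=A}; this reduces the task to (i) identifying the point spectrum of $A$ and (ii) computing the full spectrum of the truncated differential operator $A_0$. For (i) I would establish $\sigma_{\rm p}(A) = \{0\}$ (Lemma \ref{lem-spectrum-A}). The value $\lambda = 0$ is an eigenvalue, realized by the periodically continued profile derivative $U_*' \in \dom(A)$, for which $A U_*' = 0$ by the profile equation (\ref{ode-1}). For $\lambda \neq 0$, the substitution $w = \partial_z^{-1} v$ eliminates the non-local term and turns $A v = \lambda v$ into the second-order equation
\[
\frac{d}{dz}\left[\tfrac16(\pi^2 - z^2)\,w'(z)\right] + w(z) = \lambda w'(z), \qquad z \in (-\pi,\pi),
\]
whose leading coefficient $c_* - U_*(z) = \tfrac16(\pi^2-z^2)$ degenerates exactly at the peaks $z = \pm\pi$, which are regular singular points. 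I would then show that no nontrivial solution $w$ that is $2\pi$-periodic, mean zero, and has $w' \in \dot L^2_{\rm per}$ exists for $\lambda \neq 0$, by examining the indicial exponents at $z = \pm\pi$.

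For (ii) I would compute $\sigma(A_0)$, where $A_0 v = \partial_z[\tfrac16(\pi^2-z^2) v]$, via the change of variables $z = \pi\tanh(\tfrac{\pi}{6}\zeta)$, which maps $(-\pi,\pi)$ onto $\mathbb{R}$, sends the peaks to $\zeta = \pm\infty$, and (after a rescaling $v \mapsto W$) identifies $\dot L^2_{\rm per}$ isometrically with the codimension-one subspace $\mathcal{H} = \{W \in L^2(\mathbb{R}) : \langle W, \sech(\tfrac{\pi}{6}\zeta)\rangle = 0\}$, the image of the zero-mean constraint. Under this conjugation $A_0$ becomes the restriction to $\mathcal{H}$ of the operator $B_0 = \frac{d}{d\zeta} - \frac{\pi}{6}\tanh(\tfrac{\pi}{6}\zeta)$ on $L^2(\mathbb{R})$ (Lemma \ref{lem-tech}). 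I would then compute $\sigma(B_0)$ directly (Lemmas \ref{lem-spectrumB0c}--\ref{lem-spectrumB0}): the homogeneous solution of $B_0 W = \lambda W$ is $W(\zeta) = e^{\lambda\zeta}\cosh(\tfrac{\pi}{6}\zeta)$, which is never in $L^2(\mathbb{R})$, so $\sigma_{\rm p}(B_0) = \emptyset$ and hence $\sigma_{\rm p}(A_0) = \emptyset$; comparison with the asymptotic constant-coefficient operators $\frac{d}{d\zeta} \mp \frac{\pi}{6}$, whose spectra are the vertical lines $\Real(\lambda) = \mp\tfrac{\pi}{6}$, shows (via Fourier transform / exponential dichotomies) that $B_0 - \lambda$ is Fredholm precisely for $|\Real(\lambda)| \neq \tfrac{\pi}{6}$, is boundedly invertible for $|\Real(\lambda)| > \tfrac{\pi}{6}$, and for $|\Real(\lambda)| < \tfrac{\pi}{6}$ is injective with range of codimension one (the orthogonal complement of the range being spanned by $e^{-\bar\lambda\zeta}\sech(\tfrac{\pi}{6}\zeta) \in L^2(\mathbb{R})$), while on the two lines $|\Real(\lambda)| = \tfrac{\pi}{6}$ its range is dense but not closed. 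Thus $\sigma(B_0) = \{\lambda \in \mathbb{C} : |\Real(\lambda)| \le \tfrac{\pi}{6}\}$, with the open strip in the residual spectrum and the two boundary lines in the continuous spectrum. Finally, since $\sech(\tfrac{\pi}{6}\zeta) \in \ker B_0^*$ the subspace $\mathcal{H}$ is invariant under $B_0$, and a short rank-one bookkeeping argument shows that passing to the restriction $B_0|_{\mathcal{H}}$ removes no non-resolvent point, so $\sigma(A_0) = \{\lambda \in \mathbb{C} : |\Real(\lambda)| \le \tfrac{\pi}{6}\}$.

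To conclude, $K = \partial_z^{-1}$ is compact on $\dot L^2_{\rm per}$ and $\dom(A) = \dom(A_0)$; moreover $\sigma_{\rm p}(A) = \{0\} \subset \sigma(A_0)$ gives $\sigma_{\rm p}(A) \cap \rho(A_0) = \emptyset$, and $\sigma_{\rm p}(A_0) = \emptyset$ gives $\sigma_{\rm p}(A_0) \cap \rho(A) = \emptyset$, so Theorem \ref{thm-spectrumA0=A} applies and yields $\sigma(A) = \sigma(A_0) = \{\lambda : |\Real(\lambda)| \le \tfrac{\pi}{6}\}$, which is (\ref{spectrum}). Since this set contains points with $\Real(\lambda) > 0$ it is not contained in $i\mathbb{R}$, so $U_*$ is spectrally unstable in the sense of Definition \ref{def-spectral-stability}. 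I expect the main obstacle to lie in step (ii), specifically in promoting the singular change of variables to a genuine unitary equivalence of \emph{closed} operators --- matching $\dom(A_0)$, which encodes the degeneracy at the peaks, with $\dom(B_0|_{\mathcal{H}})$ --- and then in the Fredholm bookkeeping that realizes the entire open strip as residual spectrum, a phenomenon that does not occur for self-adjoint or relatively compact perturbations. A secondary difficulty is the careful treatment of the admissible-solution conditions at the singular points $z = \pm\pi$ in step (i).
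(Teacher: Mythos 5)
Your proposal is correct and follows the same architecture as the paper: the decomposition $A=A_0+K$ with $K=\partial_z^{-1}$ compact, the identification $\sigma_{\rm p}(A)=\{0\}$, the singular change of variables $z=\pi\tanh(\cdot)$ turning $A_0$ into the first-order operator $\partial_y-\tanh(y)$ on the line with a one-dimensional $\operatorname{sech}$-constraint, and the final appeal to Theorem \ref{thm-spectrumA0=A}. The differences are in the execution of two lemmas, and they are legitimate variants. For the spectrum of $B_0$ the paper solves the resolvent equation explicitly by variation of parameters and proves the bounded invertibility for $|\Real\mu|>1$ by Cauchy--Schwarz/Young estimates, locating the residual spectrum through the extra constraint (\ref{constraint-f}); you instead invoke exponential dichotomies/Fredholm theory relative to the limiting constant-coefficient operators and read off the codimension-one range from $\ker B_0^*\ni e^{-\bar\lambda\zeta}\operatorname{sech}$, working on all of $L^2(\R)$ and then restricting to the invariant subspace $\mathcal{H}$ --- this is essentially the content of the paper's Remark \ref{remark-tilde}, and your ``rank-one bookkeeping'' does need the small case distinction $\lambda\neq 0$ versus $\lambda=0$ that the paper handles via (\ref{add-constraint-f}). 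Your approach buys a more conceptual explanation of why the strip appears (Fredholm index jump across the lines $\Real\lambda=\pm\pi/6$), while the paper's explicit estimates give the resolvent bound directly and keep everything self-contained. One caution on your Step (i): the indicial analysis at $z=\pm\pi$ only eliminates the singular second solution; the regular solution (the analogue of the paper's $f_1(z)=2z+3\lambda$, i.e.\ $w$ quadratic in $z$ in your formulation) is perfectly admissible locally and is excluded only by the periodicity/zero-mean constraint, which is what forces $\lambda=0$ --- you list these constraints, but the exclusion should be attributed to them rather than to the indicial exponents.
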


\begin{theorem}
\label{thm-mainresult-mod}
Consider the operator $A$ given by (\ref{eq-L}) on $\dot L^2_{\rm per}$
with $\dom(A)$ given by (\ref{dom-A}) for $p = 2$ and $U_*$ in (\ref{eq-peakedwave-mod}). Then,
\begin{equation}
\label{spectrum-mod}
    \sigma(A) = \left\{ \lambda \in\C : \;\; -\frac{\pi}{4}\leq \Real(\lambda)\leq \frac{\pi}{4} \right\}.
\end{equation}
Consequently, the peaked wave $U_*$ is spectrally unstable in the reduced modified Ostrovsky equation
(\ref{eq-redmodOst}).
\end{theorem}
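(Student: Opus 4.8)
The plan is to mirror the three-part strategy used for Theorem \ref{thm-mainresult}, now with $p=2$ and $U_*$ as in (\ref{eq-peakedwave-mod}): identify the point spectrum of $A$, compute the spectrum of the truncated operator $A_0$, and match the two through the compact-perturbation result Theorem \ref{thm-spectrumA0=A}. For the first part I would simply invoke Lemma \ref{lem-spectrum-A} to record that $\sigma_{\rm p}(A) = \{0\}$, the zero eigenvalue being generated by translation invariance with eigenfunction $U_*' = \tfrac{1}{\sqrt 2}\sgn(z) \in \dom(A)$.

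The second part is where the cubic case genuinely differs from the quadratic one. With $c_* = \pi^2/8$ one computes
\[
c_* - U_*(z)^2 = \tfrac12\,|z|\,(\pi-|z|), \qquad z \in (-\pi,\pi),
\]
which vanishes at the peak points $z = \pm\pi$ \emph{and} at the interior point $z = 0$. For $v\in\dom(A_0)$ the flux $(c_*-U_*^2)v$ lies in $\dot H^1_{\rm per}$, hence is continuous, and square integrability of $v$ near a simple zero of $c_*-U_*^2$ forces this flux to vanish there; consequently $A_0$ decouples into a direct sum of two operators supported on $(0,\pi)$ and $(-\pi,0)$, unitarily equivalent to one another through $z\mapsto -z$. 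On $(0,\pi)$, the change of variables of Lemma \ref{lem-tech} — explicitly $x = \tfrac2\pi\ln\tfrac{z}{\pi-z}$ together with the unitary weight $v\mapsto(c_*-U_*^2)^{1/2}v$ — maps $(0,\pi)$ onto $\R$ and turns $A_0 v = \lambda v$ into $B_0 u = \lambda u$ with
\[
B_0 = \partial_x - \frac{\pi}{4}\tanh\!\Big(\frac{\pi x}{4}\Big)
\]
on $L^2(\R)$, whose asymptotic coefficients are $\mp\tfrac{\pi}{4}$ as $x\to\pm\infty$. Applying Lemmas \ref{lem-spectrumB0c} and \ref{lem-spectrumB0} then gives $\sigma(B_0) = \{\lambda\in\C : |\Real(\lambda)|\le\tfrac{\pi}{4}\}$ with $\sigma_{\rm p}(B_0) = \emptyset$: the vertical lines $\Real(\lambda) = \pm\tfrac{\pi}{4}$ form the essential spectrum because the asymptotic operators $\partial_x \mp \tfrac{\pi}{4} - \lambda$ are then non-invertible, while for $|\Real(\lambda)|<\tfrac{\pi}{4}$ the first-order operator $B_0 - \lambda$ has trivial kernel but Fredholm index $-1$, so the open strip is residual spectrum, and closedness of the spectrum supplies the boundary lines. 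Hence $\sigma(A_0) = \{\lambda\in\C : |\Real(\lambda)|\le\tfrac{\pi}{4}\}$ and $\sigma_{\rm p}(A_0) = \emptyset$.

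For the third part I would verify the hypotheses of Theorem \ref{thm-spectrumA0=A} for $A = A_0 + K$ with $K = \partial_z^{-1}$ compact and $\dom(A) = \dom(A_0)$: since $0\in\sigma(A_0)$ we have $0\notin\rho(A_0)$, so $\sigma_{\rm p}(A)\cap\rho(A_0) = \{0\}\cap\rho(A_0) = \emptyset$; and $\sigma_{\rm p}(A_0) = \emptyset$ makes $\sigma_{\rm p}(A_0)\cap\rho(A) = \emptyset$ trivially. Theorem \ref{thm-spectrumA0=A} then yields $\sigma(A) = \sigma(A_0) = \{\lambda\in\C : |\Real(\lambda)|\le\tfrac{\pi}{4}\}$, which is (\ref{spectrum-mod}); since this set is not contained in $i\R$, Definition \ref{def-spectral-stability} gives spectral instability of $U_*$ in (\ref{eq-redmodOst}).

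The step I expect to be the main obstacle is the second one. One must carefully justify that $\dom(A_0)$ truly decouples along $(0,\pi)\cup(-\pi,0)$, controlling the local behaviour of $v$ at the \emph{interior} zero $z=0$ of $c_*-U_*^2$ so that no coupling or boundary term survives and dealing with the zero-mean constraint — a feature absent in the quadratic case, where $c_*-U_*^p$ vanishes only at $\pm\pi$ — and that the weighted change of variables of Lemma \ref{lem-tech} is an honest unitary equivalence onto $L^2(\R)$ on each piece, so that the Fredholm/residual-spectrum analysis of $B_0$ transfers verbatim to $A_0$, the boundary lines included.
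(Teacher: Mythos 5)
Your overall architecture (point spectrum of $A$, spectrum of the truncated operator $A_0$ via the coordinate change, then Theorem \ref{thm-spectrumA0=A}) is exactly the one the paper uses, and your Step 4 and the scaling giving the strip $|\Real(\lambda)|\leq \frac{\pi}{4}$ are correct. However, there are two genuine gaps. First, you cannot ``simply invoke'' Lemma \ref{lem-spectrum-A} for the point spectrum: that lemma is stated and proved for $p=1$, with $c_*-U_* = \frac{1}{6}(\pi^2-z^2)$, and its ODE analysis does not transfer to the cubic case. For $p=2$ a separate argument is required (the paper's Lemma \ref{lem-spectrum-A-mod}): since $c_*-U_*^2 = \frac12|z|(\pi-|z|)$ vanishes also at the interior point $z=0$, one differentiates $Af=\lambda f$ on $(-\pi,0)$ and $(0,\pi)$ separately, integrates to obtain (\ref{g-explicit-mod}), shows that square integrability of $|z|(\pi-|z|)f'$ forces $f$ to be piecewise constant, uses the zero-mean condition to get $f=f_0\,{\rm sign}(z)$, and then pins $\lambda=0$ by a parity argument ($Af$ is even while $\lambda f$ is odd) --- not by the mean-zero integral alone as in the quadratic case. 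This content is simply missing from your proposal.

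Second, the step you yourself flag as the main obstacle is indeed not closed, and the route you sketch (direct-sum decoupling over $(0,\pi)$ and $(-\pi,0)$ plus the spectrum of the unconstrained operator $\partial_x-\frac{\pi}{4}\tanh(\frac{\pi x}{4})$ on $L^2(\R)$) is not what Lemmas \ref{lem-spectrumB0c} and \ref{lem-spectrumB0} assert: those lemmas are proved on the constrained space $\tilde L^2(\R)$, and, more importantly, $A_0$ on $\dot L^2_{\rm per}$ is \emph{not} a direct sum of two half-interval operators, because the single zero-mean constraint couples the two halves. Since the strip consists of residual and continuous (not point) spectrum, you cannot pass from the direct sum to the codimension-one subspace without re-checking non-density of the range and unboundedness of the inverse there; this is exactly why the paper's proof of Lemma \ref{lem-spectrumB0} treats $\mu=0$ separately, where the solvability condition coincides with the constraint defining $\tilde L^2(\R)$. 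The paper's resolution, which is the missing idea, is Lemma \ref{lem-tech-mod}: map the two half-intervals to $\R$ by (\ref{z-y-mod})--(\ref{z-y-mod2}), set $v(z_\pm)=\cosh(y)w_\pm(y)$ and $w:=w_++w_-$, so that the zero-mean condition becomes the single constraint $\langle w,{\rm sech}\rangle=0$ and the spectral problem for $A_0$ becomes $B_0 w=\mu w$ with $\mu=\frac{4}{\pi}\lambda$ on precisely the space $\tilde L^2(\R)$ for which Lemmas \ref{lem-spectrumB0c} and \ref{lem-spectrumB0} were established. Your flux-vanishing observation at $z=0$, the identification of the weight $(c_*-U_*^2)^{1/2}$ (which equals $\frac{\pi}{2\sqrt2}\,{\rm sech}(y)$ in the new variable), and your Fredholm-index description of the unconstrained operator (consistent with Remark \ref{remark-tilde}) are all correct ingredients, but as written the reduction of $\sigma(A_0)$ to these facts is not carried out.
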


\begin{remark}
\label{remark-zero}
One can find the explicit eigenvector for $0 \in \sigma(A)$ thanks to the translational symmetry
implying $A U_*' = 0$, where $U_*' \in {\rm dom}(A) \subset \dot{L}_{\rm per}^2$. Therefore, $0 \in \sigma_{\rm p}(A)$.
We show in Lemmas \ref{lem-spectrum-A} and \ref{lem-spectrum-A-mod} that $\sigma_{\rm p}(A) = \{ 0 \}$.
\end{remark}

\begin{remark}
We are not able to distinguish between residual and continuous spectrum in $\sigma(A) \backslash \{0\}$. This is because
we truncate the operator $A$ to an operator $A_0$ with the same domain and use the result of Theorem \ref{thm-spectrumA0=A}.
For the operator $A_0$ we prove in Lemmas \ref{lem-tech}, \ref{lem-spectrumB0c}, \ref{lem-spectrumB0}, and \ref{lem-tech-mod}  that
$\sigma_{\rm p}(A_0)$ is empty, $\sigma_{\rm r}(A_0)$ is the open vertical strip in (\ref{spectrum}) and (\ref{spectrum-mod}),
whereas $\sigma_{\rm c}(A_0)$ is the boundary of that vertical strip.
\end{remark}

\begin{remark}
The Sobolev space $\dot{H}^1_{\rm per}$ is continuously embedded into $\dom(A)$ in the sense that
there exists $C > 0$ such that for every $f \in \dot{H}^1_{\rm per}$, we have
$\partial_z  \left[ (c_* - U_*^p) f \right] \in  \dot L^2_{\rm per}$ with the bound
$$
\| \partial_z \left[ (c_* - U_*^p) f \right] \|_{L^2_{\rm per}} \leq C \| f \|_{H^1_{\rm per}}.
$$
However, $\dot{H}^1_{\rm per}$ is not equivalent to $\dom(A)$ because
piecewise continuous functions with finite jump discontinuities at the points $z$
where $c_* - U_*^p(z)$ vanishes belong to ${\rm dom}(A)$ but do not belong to $\dot{H}^1_{\rm per}$. For example,
the eigenvector $U_*' \in {\rm dom}(A)$ for $0 \in \sigma_{\rm p}(A)$
does not belong to $\dot{H}^1_{\rm per}$.
\label{rem-domain}
\end{remark}

\begin{remark}
One might ask whether looking at Sobolev spaces of higher regularity would result in a change of the spectrum
of the linearized operator at the peaked periodic waves. In order to answer this question,
let us introduce a hierarchy of the maximal domains of the iterated operator $A^n$ with $n \in \mathbb{N}$ by
\begin{equation*}
\dom(A^n) = \left\{ v \in  \dot L^2_{\rm per} : \quad \bigcap_{k=1}^n \left[ \partial_z  (c_* - U_*^p) \cdot \right]^k v  \in  \dot L^2_{\rm per} \right\}.
\end{equation*}
Then the operator $A_n : \dom(A^n) \subset \dom(A^{n-1}) \mapsto \dom(A^{n-1})$ for $n \geq 2$ has the same spectrum
as the operator $A : \dom(A) \subset \dot{L}^2_{\rm per} \mapsto \dot{L}^2_{\rm per}$
because the computations in Lemmas \ref{lem-spectrumB0c} and \ref{lem-spectrumB0}
are independent on $n$. The Sobolev space $\dot{H}^n_{\rm per}$
is continuously embedded into $\dom(A^n)$ but is not equivalent to $\dom(A^n)$, see Remark \ref{rem-domain}.
Consequently, it is not clear if the
Cauchy problem for periodic perturbations to the peaked periodic waves can be uniquely solved in any of the subspaces
of $\dot{L}^2_{\rm per}$ given by $\dom(A^n)$.
\label{rem-Cauchy}
\end{remark}

\section{Proof of Theorem \ref{thm-mainresult}}

For the peaked periodic wave $U_*$ in (\ref{eq-peakedwave}) in the case $p = 1$,
we write explicitly
\begin{equation}
\label{representation-1}
c_* - U_*(z) = \frac{1}{6} \left[ \pi^2 - z^2 \right], \quad z \in [-\pi,\pi].
\end{equation}
The eigenvector for $0 \in \sigma_{\rm p}(A)$ is given by
\begin{equation}
\label{eigenvector-1}
U_*'(z) = \frac{1}{3} z, \quad z \in (-\pi,\pi).
\end{equation}
The proof of Theorem \ref{thm-mainresult} can be divided into four steps.

\subsection*{Step 1: Point spectrum of $\boldsymbol A$.}

If $\lambda \in \sigma_p(A)$, then there exists $f \in {\rm dom}(A)$, $f\neq 0$, such that $A f = \lambda f$.
It follows from Remark \ref{remark-zero} that $0 \in \sigma_{\rm p}(A)$ with the eigenvector $U_*'$ in (\ref{eigenvector-1}).
The following result shows that no other eigenvalues of $\sigma_{\rm p}(A)$ exists.

\begin{lemma}
\label{lem-spectrum-A}
$\sigma_{\rm p}(A) = \{0\}$
\end{lemma}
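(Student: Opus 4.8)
The plan is to show that the only solution of $Af=\lambda f$ with $f\in\dom(A)$ is $\lambda=0$. First I would write the eigenvalue equation out explicitly using (\ref{eq-L}) and the representation (\ref{representation-1}). Setting $g(z) := (c_* - U_*(z))f(z) = \tfrac16(\pi^2-z^2)f(z)$, the equation $Af=\lambda f$ becomes $g'(z) + (\partial_z^{-1}f)(z) = \lambda f(z)$. Since $f\in\dot L^2_{\rm per}$, the anti-derivative $h := \partial_z^{-1}f \in \dot H^1_{\rm per}$ satisfies $h' = f$, so differentiating once more (in the distributional sense, which is legitimate because $g'=\lambda f - h \in \dot L^2_{\rm per}$ already by the domain condition, and then $\lambda f - h$ is itself differentiable away from the peak once we know $f$ is) I would aim to derive a second-order ODE for $g$ on the open interval $(-\pi,\pi)$ where $c_*-U_* > 0$. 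Concretely, writing $f = g/(c_*-U_*)$ and $h' = f$, one gets $g'' = \lambda f - f = (\lambda-1) g/(c_*-U_*)$, i.e.
\[
\tfrac16(\pi^2-z^2)\, g''(z) = (\lambda - 1)\, g(z), \qquad z\in(-\pi,\pi).
\]

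\textbf{Analysis of the ODE.} This is a linear second-order ODE with regular singular points at $z=\pm\pi$. The key step is to apply Frobenius analysis at the endpoints: near $z=\pi$, writing $\pi^2 - z^2 = (\pi-z)(\pi+z) \approx 2\pi(\pi-z)$, the indicial equation is $\tfrac{\pi}{3}r(r-1) = 0$, so the two Frobenius exponents are $r=0$ and $r=1$. The solution with exponent $r=1$ vanishes at $z=\pi$ and is the one compatible with $f = g/(c_*-U_*)$ being merely $L^2$ (since $c_*-U_* \sim 2\pi(\pi-z)/6$ near the peak, $g$ must vanish at least linearly for $f$ to avoid a non-$L^2$ singularity of order $(\pi-z)^{-1}$; more carefully, $f\in L^2$ near $z=\pi$ forces $g(z) = o((\pi-z)^{1/2})$, which already rules out the $r=0$ branch). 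A symmetric analysis applies at $z=-\pi$. Thus any eigenfunction must correspond to a solution of the ODE that is "subdominant" (vanishing) at both endpoints simultaneously — i.e., $\lambda$ must be such that the ODE has a nontrivial solution with Frobenius exponent $1$ at both $z=\pi$ and $z=-\pi$.

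\textbf{Ruling out nonzero eigenvalues.} For $\lambda = 0$ the equation becomes $\tfrac16(\pi^2-z^2)g'' = -g$; here $g(z) = z(\pi^2-z^2)$ — wait, one checks directly that $g(z)=\tfrac16 z(\pi^2-z^2)\cdot(\text{const})$ does not quite work, so instead I note that the known eigenfunction $f=U_*'=z/3$ gives $g=(c_*-U_*)f = \tfrac{z}{18}(\pi^2-z^2)$, which indeed vanishes linearly at both $\pm\pi$, confirming $0\in\sigma_{\rm p}(A)$. For $\lambda\neq 0$ one argues there is no such doubly-subdominant solution. The cleanest route: the substitution $z = \pi\cos\theta$ (or $z=\pi\tanh$-type) transforms the ODE into an associated Legendre / Gegenbauer equation whose eigenvalues (requiring the solution to be bounded, equivalently in the right Frobenius class, at both singular points) form a discrete set; matching against $\lambda=1-$(that discrete set) shows the only value landing at $\lambda=0$ is the translational mode, and the others would correspond to $\lambda$ with a definite sign that is incompatible — alternatively, and more robustly, one uses an energy/Wronskian argument: multiply the ODE by $\bar g$, integrate over $(-\pi,\pi)$, and use that the boundary terms vanish because $g$ and $g'$ are controlled at $\pm\pi$ by the Frobenius exponent $1$; this yields $(\lambda-1)\int |g|^2/(c_*-U_*)\,dz = -\int |g'|^2\,dz$, i.e. $(1-\lambda)\|f\|_{L^2,\,w}^2 = \|g'\|_{L^2}^2 \geq 0$ with weight $w = c_*-U_* > 0$; but $\lambda$ could be complex, so taking imaginary parts forces $\Imag(\lambda)\,\|f\|_{L^2,w}^2 = 0$, hence $\lambda$ is real, and then taking real parts gives $\lambda \leq 1$; to pin down $\lambda = 0$ exactly, combine with a second identity obtained by multiplying instead by $\overline{\partial_z^{-1}g}$ or by exploiting the specific polynomial structure. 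I expect the main obstacle to be precisely this last step — extracting enough from the integral identities (or the special-function classification) to eliminate \emph{all} nonzero real $\lambda\leq 1$, not merely complex or large ones — and I anticipate the cleanest argument uses the explicit Frobenius solutions at both endpoints together with a direct computation showing the connection coefficient (the coefficient of the dominant branch at $z=-\pi$ in the subdominant-at-$z=\pi$ solution) is nonzero for every $\lambda\neq 0$.
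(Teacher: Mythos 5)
Your derivation of the second-order ODE contains a decisive error, and everything downstream rests on it. Writing $g=(c_*-U_*)f$, the eigenvalue equation is $g'+\partial_z^{-1}f=\lambda f$; differentiating gives $g''+f=\lambda f'$, \emph{not} $g''=\lambda f-f$ as you wrote (you dropped the derivative on the right-hand side). The correct equation in terms of $f$ is
\begin{equation*}
(\pi^2-z^2)f''(z)-4zf'(z)+4f(z)=6\lambda f'(z),\qquad z\in(-\pi,\pi),
\end{equation*}
which is precisely (\ref{ode-spectrum}) in the paper. Because of the $\lambda f'$ term, the indicial exponents at $z=\pm\pi$ are \emph{$\lambda$-dependent} (at $z=\pi$ they are $0$ and $-1-3\lambda/\pi$), not the fixed pair $\{0,1\}$ you obtained from the spurious equation $\tfrac16(\pi^2-z^2)g''=(\lambda-1)g$. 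Consequently your Frobenius classification, and in particular the "energy" identity $(\lambda-1)\int|g|^2/(c_*-U_*)\,dz=-\int|g'|^2\,dz$ with its conclusions that $\lambda$ is real and $\lambda\le 1$, have no counterpart for the true equation: the problem is genuinely non-self-adjoint (indeed the full spectrum of $A$ is a vertical strip), so no such symmetric quadratic-form argument can be expected to survive the correction. Your check at $\lambda=0$ happens to work only because the erroneous term vanishes there.

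Beyond this, even on its own terms the proposal is incomplete: the final and essential step—eliminating \emph{all} nonzero admissible $\lambda$—is left as a hoped-for connection-coefficient or special-function computation that you do not carry out. The paper's route is more elementary and avoids both issues: the correct ODE has the explicit polynomial solution $f_1(z)=2z+3\lambda$ for every $\lambda$; reduction of order gives the second solution in closed form, whose endpoint behavior $(\pi\mp z)^{\mp 3\lambda/\pi-2}$ for its weighted derivative shows it never lies in ${\rm dom}(A)$; and then the zero-mean constraint $\int_{-\pi}^{\pi}f_1\,dz=0$ forces $\lambda=0$. If you repair your derivation you are led back to exactly this structure, so I recommend redoing the computation from $g''+f=\lambda f'$ and exploiting the explicit first solution rather than attempting a variational identity.
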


\begin{proof}
First we note that if $f \in {\rm dom}(A)$, then $f \in H^1(-\pi,\pi)$ so that
$f \in C^0(-\pi,\pi)$ by Sobolev embedding. Bootstrapping arguments for $A f = \lambda f$
immediately yield that $f \in C^{\infty}(-\pi,\pi)$, hence the spectral problem
$A f = \lambda f$ for $f \in {\rm dom}(A)$ can be differentiated once in $z$
to yield the second-order differential equation
\begin{equation}
\label{ode-spectrum}
(\pi^2 - z^2) f''(z) - 4 z f'(z) + 4 f(z) = 6 \lambda f'(z), \quad z \in (-\pi,\pi).
\end{equation}
One solution is available in closed form: $f_1(z) = 2z + 3 \lambda$.
In order to obtain the second linearly independent solution,
we write $f_2(z) = (2z + 3 \lambda) g(z)$ and derive the following equation
for $g$:
\begin{equation}
\label{ode-spectrum-g}
(\pi^2 - z^2) (2z + 3 \lambda) g''(z) + 2 \left[ 2(\pi^2-z^2) - (2z + 3 \lambda)^2 \right] g'(z) = 0, \quad z \in (-\pi,\pi).
\end{equation}
This equation can be integrated once to obtain
\begin{equation}
\label{g-explicit}
g'(z) = \frac{g_0}{(\pi^2 - z^2)^2 (2 z + 3 \lambda)^2} \left( \frac{\pi + z}{\pi - z} \right)^{\frac{3\lambda}{\pi}}, \quad z \in (-\pi,\pi),
\end{equation}
where $g_0$ is a constant of integration. Computing the limits $z \to \pm \pi$ shows
that if $\pm 2 \pi + 3 \lambda \neq 0$, then
$$
g'(z) \sim \left\{ \begin{array}{l} (\pi - z)^{-\frac{3\lambda}{\pi} -2}, \quad z \to \pi, \\
(\pi + z)^{\frac{3\lambda}{\pi} -2}, \quad z \to -\pi. \end{array} \right.
$$
This sharp asymptotical behavior shows that $(\pi^2 - z^2) g'(z) \notin L^2(-\pi,\pi)$,  even if $g \in L^2(-\pi,\pi)$. Therefore, for every $\lambda \in \mathbb{C}$ with $\pm 2 \pi + 3 \lambda \neq 0$,
the second solution $f_2(z)$ does not belong to ${\rm dom}(A) \subset \dot{L}^2_{\rm per}$
because of the divergences as $z\to\pm \pi$. For $\pm 2 \pi + 3 \lambda = 0$, the explicit expression (\ref{g-explicit}) yields
$$
g'(z) = \frac{g_0}{4 (\pi^2 - z^2)^2 (\pi \pm z)^2}, \quad z \in (-\pi,\pi),
$$
which still implies that $f_2$ does not belong to ${\rm dom}(A) \subset \dot{L}^2_{\rm per}$.
Hence, for every $\lambda \in \mathbb{C}$, if $f \in {\rm dom}(A) \subset \dot{L}^2_{\rm per}$
is a solution to $Af = \lambda f$, then $f$ is proportional to
$f_1(z) = 2z + 3 \lambda$ only. The zero-mass constraint $\int_{-\pi}^{\pi} f_1(z) dz = 0$ required
for $f_1 \in \dot{L}^2_{\rm per}$ yields $\lambda = 0$, so that $f_1(z) = 2z = 6 U_*'(z)$ given by 
(\ref{eigenvector-1}). No other $\lambda \in \C$ such that a nonzero solution $f$ of
$A f = \lambda f$ belongs to ${\rm dom}(A) \subset \dot{L}^2_{\rm per}$ exists.
\end{proof}

\subsection*{Step 2: Truncation of $\boldsymbol A$.}

By using (\ref{representation-1}), $A_0$ in (\ref{eq-L1}) is rewritten in the explicit form
\begin{equation}
\label{eq-L1-explicit}
(A_0 v)(z) = \tfrac{1}{6} \partial_z \left[ (\pi^2-z^2) v(z) \right], \quad z \in (-\pi,\pi).
\end{equation}
Inserting the expression (\ref{representation-1}) in the transformation formula (\ref{coor-transf}) for $p=1$ yields
\begin{equation}
\label{z-xi}
    \frac{\d z}{\d \xi} = \tfrac{1}{6} (\pi^2- z^2),
\end{equation}
which we can solve to find that
\begin{equation}
\label{z-y}
    z = \pi \tanh\left (\frac{\pi\xi}{6}\right),
\end{equation}
where the constant of integration is defined without loss of generality from the condition that $z = 0$ at $\xi = 0$.
By using the explicit transformation formula (\ref{z-y}), we can rewrite the spectral problem $A_0 v = \lambda v$ in an equivalent but more convenient form.

\begin{lemma}
\label{lem-tech}
The spectral problem $A_0 v = \lambda v$ with $A_0 : {\rm dom}(A_0) \subset \dot{L}^2_{\rm per} \to \dot L^2_{\rm per}$
given by (\ref{eq-L1-explicit}) is equivalent to the spectral problem $B_0 w = \mu w$ with
\begin{equation}
\label{mu}
\mu = \frac{6}{\pi} \lambda,
\end{equation}
where $B_0 : {\rm dom}(B_0) \subset \tilde{L}^2(\R) \to \tilde{L}^2(\R)$ is the linear operator given by
\begin{equation}
\label{eq-tildeL1}
(B_0 w)(y) \coloneqq \partial_y w(y) - \tanh(y) w(y), \quad y \in \R,
\end{equation}
with maximal domain
\begin{equation}
\label{dom-B-0}
    {\rm dom}(B_0) = \left\{ w \in \tilde{L}^2(\R) : \; (\partial_y - \tanh y) w \in \tilde{L}^2(\R) \right\}= H^1(\R) \cap \tilde{L}^2(\R),
\end{equation}
where $\tilde{L}^2(\R)$ is the constrained $L^2$ space given by
\begin{equation}
\label{dot-L}
\tilde{L}^2(\R) \coloneqq \{ w \in L^2(\R) : \quad \langle w, \varphi \rangle = 0 \}
\end{equation}
with $\varphi(y) := {\rm sech}(y)$.
\end{lemma}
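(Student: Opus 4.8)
The plan is to show that the change of variables $z = \pi\tanh(\pi\xi/6)$ conjugates $A_0$ to $B_0$, after a suitable rescaling of the spectral parameter and a gauge (multiplicative) transformation that absorbs the zeroth-order coefficient into the form $\partial_y - \tanh y$. First I would introduce the rescaled variable $y = \pi\xi/6$, so that (\ref{z-y}) becomes $z = \pi\tanh y$, and compute $\frac{dz}{dy} = \pi\,\mathrm{sech}^2 y = \frac{\pi}{6}\cdot\frac{1}{\pi}(\pi^2 - z^2)$, i.e.\ $\frac{1}{6}(\pi^2 - z^2) = \frac{\pi}{6}\,\mathrm{sech}^2 y$; this is the key identity that makes $A_0$ transform nicely. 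Given $v \in \dot L^2_{\rm per}$, I would set $v(z) = \alpha(y)\, w(y)$ for a weight $\alpha$ to be chosen, and rewrite $(A_0 v)(z) = \frac{1}{6}\partial_z[(\pi^2 - z^2)v(z)]$. Using $\partial_z = \frac{1}{dz/dy}\partial_y = \frac{6}{\pi}\,\mathrm{cosh}^2 y\,\partial_y$ together with the identity above, one computes
\begin{equation*}
(A_0 v)(z) = \frac{6}{\pi}\cosh^2 y\,\partial_y\!\left[\frac{\pi}{6}\,\mathrm{sech}^2 y \cdot \alpha(y) w(y)\right] = \cosh^2 y\,\partial_y\!\left[\mathrm{sech}^2 y\,\alpha(y) w(y)\right].
\end{equation*}
Expanding and matching with $\lambda v = \lambda\,\alpha w$ after multiplying through, the spectral equation $A_0 v = \lambda v$ becomes a first-order ODE for $w$ whose zeroth-order term can be eliminated by choosing $\alpha(y) = \cosh y = 1/\mathrm{sech}(y)$; with this choice the equation collapses to $\partial_y w - \tanh(y) w = \frac{6}{\pi}\lambda\, w = \mu w$, which is exactly $B_0 w = \mu w$.

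Next I would verify that the map $v \mapsto w$ is an isomorphism of the correct function spaces. The $2\pi$-periodicity of $v$ on $(-\pi,\pi)$ corresponds, under $z = \pi\tanh y$, to $w$ being defined on $y \in \R$ with the endpoints $z = \pm\pi$ sent to $y = \pm\infty$; I would check that the $L^2$ norms are equivalent, i.e.\ that $v \in L^2(-\pi,\pi)$ iff $w = v/\cosh(y)\big|_{z = \pi\tanh y} \in L^2(\R)$, by the substitution $dz = \pi\,\mathrm{sech}^2 y\, dy$ and noting $|v(z)|^2\,dz = \pi\,|\alpha(y)w(y)|^2\mathrm{sech}^2 y\,dy = \pi|w(y)|^2\,dy$, so in fact $\|v\|_{L^2(-\pi,\pi)}^2 = \pi\|w\|_{L^2(\R)}^2$ exactly — a pleasant consequence of the gauge choice $\alpha = \cosh y$. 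The zero-mean constraint $\int_{-\pi}^{\pi} v(z)\,dz = 0$ transforms to $\int_\R \alpha(y)w(y)\,\mathrm{sech}^2 y\,\pi\,dy = \pi\int_\R w(y)\,\mathrm{sech}(y)\,dy = 0$, which is precisely the constraint $\langle w, \varphi\rangle = 0$ with $\varphi = \mathrm{sech}(y)$ defining $\tilde L^2(\R)$. Finally, for the domains, I would observe that $\mathrm{dom}(A_0)$ is characterized by $\partial_z[(c_* - U_*)v] \in \dot L^2_{\rm per}$, which under the transformation becomes $(\partial_y - \tanh y)w \in \tilde L^2(\R)$; the stated identification $\{w \in \tilde L^2(\R): (\partial_y - \tanh y)w \in \tilde L^2(\R)\} = H^1(\R)\cap\tilde L^2(\R)$ follows because $\tanh$ is bounded, so $(\partial_y - \tanh y)w \in L^2$ iff $\partial_y w \in L^2$.

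The only genuinely delicate point is the behavior at the peaks $z = \pm\pi$, i.e.\ at $y = \pm\infty$: I must make sure the transformation does not create or destroy functions near the singular endpoints, and that periodicity of $v$ (equality of boundary values and, in $\mathrm{dom}(A_0)$, the matching of $(\pi^2-z^2)\partial_z v$ across $z = \pm\pi$) is faithfully encoded by membership of $w$ in $H^1(\R)\cap\tilde L^2(\R)$. Because $c_* - U_*^p$ vanishes at $z = \pm\pi$, the "periodic boundary conditions'' in $\mathrm{dom}(A_0)$ are automatically the correct integrability conditions rather than pointwise matching, and these map cleanly to decay/regularity of $w$ at $\pm\infty$; I would spell this out carefully, since it is exactly the subtlety flagged in Remark \ref{rem-domain}. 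Once the isomorphism of spaces and the conjugation $w \circ (\text{transformation}) : A_0 \leftrightarrow B_0$ with $\mu = \frac{6}{\pi}\lambda$ are both established, the equivalence of the two spectral problems — including the correspondence of point, residual and continuous spectrum, since the conjugating map is a bounded isomorphism with bounded inverse — is immediate.
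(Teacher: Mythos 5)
Your proposal takes essentially the same route as the paper's proof: the substitution $z=\pi\tanh y$ with $y=\tfrac{\pi\xi}{6}$ and the gauge $v(z)=\cosh(y)w(y)$, the exact norm identity $\|v\|_{L^2(-\pi,\pi)}^2=\pi\|w\|_{L^2(\R)}^2$, the transformation of the zero-mean constraint into $\langle w,\sech\rangle=0$, and the identification of the maximal domain with $H^1(\R)\cap\tilde{L}^2(\R)$, so the equivalence and $\mu=\tfrac{6}{\pi}\lambda$ come out as in the paper. The only blemish is a constant slip in your displayed computation: since $\tfrac{dz}{dy}=\pi\sech^2 y$, one has $\partial_z=\tfrac{1}{\pi}\cosh^2(y)\,\partial_y$ and $\tfrac{1}{6}(\pi^2-z^2)=\tfrac{\pi^2}{6}\sech^2 y$ (not $\tfrac{6}{\pi}\cosh^2(y)\,\partial_y$ and $\tfrac{\pi}{6}\sech^2 y$), so the spectral equation reduces to $\tfrac{\pi}{6}\left(w'-\tanh(y)w\right)=\lambda w$, which is what produces $\mu=\tfrac{6}{\pi}\lambda$; as literally written, your chain would give $\mu=\lambda$, contradicting your own (correct) conclusion. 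When spelling out the domain identification, also record, as the paper does, that $\langle B_0w,\sech\rangle=\int_{\R}\tfrac{d}{dy}\left[w(y)\sech(y)\right]dy=0$ holds automatically for every $w\in H^1(\R)$, so the constraint on $B_0w$ does not shrink the maximal domain below $H^1(\R)\cap\tilde{L}^2(\R)$.
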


\begin{proof}
We first show that $v \in L^2(-\pi,\pi)$ if and only if $w\in L^2(\R)$. To this end, 
we use the substitution rule with \eqref{z-y}, set $y \coloneqq \frac{\pi\xi}{6}$ and write $v(z) = \cosh(y) w(y)$ to obtain that
\begin{align*}
    \int_{-\pi}^{\pi}v^2(z) \d z  = \pi \int_{-\infty}^{\infty}v^2(\pi \tanh y) \sech^2(y)\d y =\pi \int_{-\infty}^{\infty}w^2(y) \d y.
\end{align*}
Similarly, the zero-mean constraint in $\dot{L}^2_{\rm per}$ is transformed to
$$
0 = \int_{-\pi}^{\pi} v(z) dz = \pi \int_{-\infty}^{\infty} v(\pi \tanh y) \sech^2(y)\d y
= \pi \int_{-\infty}^{\infty} w(y) {\rm sech}(y) dy.
$$
Therefore, $v \in \dot{L}^2_{\rm per}$ if and only if $w \in \tilde{L}^2(\R)$.
Furthermore, we verify that
$$
\partial_z \left[ (\pi^2 - z^2) v \right] \in L^2(-\pi,\pi)
$$
if and only if
$$
\partial_y w - \tanh(y) w \in L^2(\R).
$$
Next we note that $B_0 w \in \tilde{L}^2(\mathbb{R})$ for every $w \in H^1(\mathbb{R})$, since
\begin{align}
\label{B0constraint}
\langle B_0 w, \varphi \rangle = \int_{\mathbb{R}} \left[ w'(y) - \tanh(y) w(y) \right] {\rm sech}(y) dy =
\int_{\mathbb{R}} \frac{d}{dy} \left[ w(y) {\rm sech}(y) \right] dy = 0.
\end{align}
This implies that the constraint $\langle B_0 w, \varphi \rangle = 0$ is identically satisfied for every $w \in H^1(\mathbb{R})$.
Moreover, if $w \in L^2(\R)$ and $[\partial_y - \tanh(y)] w \in L^2(\R)$, then $w \in H^1(\R)$.
The above arguments show that $B_0$ is closed in $\tilde{L}^2(\R)$ and
${\rm dom}(B_0) = H^1(\mathbb{R}) \cap \tilde{L}^2(\mathbb{R})$. Hence, the spectral problems for $A_0$ and $B_0$ are equivalent to each other and the spectral parameters $\lambda$ and $\mu$ are related by the transformation formula (\ref{mu}).
\end{proof}

\subsection*{Step 3: Spectrum of the truncated operator $\boldsymbol{A_0}$.}

In view of the equivalence of the spectral problems of $A_0$ and $B_0$ proven in Lemma \ref{lem-tech},  we proceed to study the spectrum of $B_0$ in $\tilde{L}^2(\R)$. The following two lemmas characterize the spectrum of $B_0$.

\begin{lemma}
\label{lem-spectrumB0c}
The point spectrum of $B_0$ is empty.
\end{lemma}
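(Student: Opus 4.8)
The plan is to show directly that for every $\mu \in \mathbb{C}$, the equation $B_0 w = \mu w$ has no nonzero solution in $\mathrm{dom}(B_0) = H^1(\mathbb{R}) \cap \tilde{L}^2(\mathbb{R})$. Since $B_0 w = \mu w$ is the first-order linear ODE $w'(y) - \tanh(y)\, w(y) = \mu\, w(y)$, its general solution is one-dimensional and available in closed form: integrating the linear factor, one finds $w(y) = w_0\, \cosh(y)\, e^{\mu y}$ for some constant $w_0 \in \mathbb{C}$. First I would write this solution out and observe its asymptotics as $y \to \pm\infty$: since $\cosh(y) \sim \tfrac12 e^{|y|}$, the solution behaves like $e^{(\mathrm{Re}\,\mu + 1)|y|}$ as $y \to +\infty$ and like $e^{(1 - \mathrm{Re}\,\mu)|y|}$ as $y \to -\infty$ (up to the oscillatory factor $e^{i\,\mathrm{Im}\,\mu\, y}$, which does not affect the modulus). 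For $w$ to lie in $L^2(\mathbb{R})$ we would need both exponents to be negative, i.e.\ $\mathrm{Re}\,\mu < -1$ and $\mathrm{Re}\,\mu > 1$ simultaneously, which is impossible. Hence no nonzero eigenfunction exists in $L^2(\mathbb{R})$, let alone in the constrained space $\tilde L^2(\mathbb{R})$, and $\sigma_{\mathrm p}(B_0) = \emptyset$.

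The one point requiring a small amount of care is the interplay with the constraint defining $\tilde L^2(\mathbb{R})$: a priori one might worry that the relevant spectral equation in the constrained space is not simply $B_0 w = \mu w$ but involves a projection onto $\{\varphi\}^\perp$. However, Lemma \ref{lem-tech} already records (see \eqref{B0constraint}) that $B_0$ maps $H^1(\mathbb{R})$ into $\tilde L^2(\mathbb{R})$ automatically, so $B_0$ is a genuine operator on $\tilde L^2(\mathbb{R})$ with no projection needed, and the eigenvalue equation is exactly the ODE above. Thus the closed-form solution analysis applies verbatim. I would also note in passing that the single closed-form candidate $w_0 \cosh(y) e^{\mu y}$ fails the zero-mean-type constraint $\langle w, \varphi\rangle = 0$ as well, but this observation is not even needed since the function is already excluded by the $L^2$ requirement.

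There is essentially no hard part here: the whole argument is the explicit integration of a scalar first-order linear ODE followed by an elementary growth-rate comparison. The only thing worth stating cleanly is why the closed-form solution spans \emph{all} solutions of $B_0 w = \mu w$ in the distributional sense on $\mathbb{R}$ — this follows because $w \in L^2(\mathbb{R})$ with $(\partial_y - \tanh y)w \in L^2(\mathbb{R})$ forces $w \in H^1_{\mathrm{loc}}(\mathbb{R})$, hence $w$ is a classical solution of the ODE on all of $\mathbb{R}$ (the coefficient $\tanh y$ being smooth and bounded), and classical ODE uniqueness then gives the one-dimensional solution space. With that remark in place the lemma is complete.
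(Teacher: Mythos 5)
Your proposal is correct and follows essentially the same route as the paper: solve the first-order ODE explicitly to get $w(y)=w_0\cosh(y)e^{\mu y}$ and observe that the decay conditions at $y\to+\infty$ and $y\to-\infty$ require $\Real(\mu)<-1$ and $\Real(\mu)>1$ simultaneously, which is impossible, so $w_0=0$. Your additional remarks on why the ODE solution space is genuinely one-dimensional in $\dom(B_0)$ and why no projection onto $\{\varphi\}^\perp$ is needed are consistent with Lemma \ref{lem-tech} and do not change the argument.
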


\begin{proof}
Let $\mu\in\C$ and $w \in \ker(B_0 -\mu  I)$, i.e.~$w$ satisfies the first-order differential equation
$$
\frac{dw}{dy} = \mu w(y) +\tanh(y) w(y).
$$
Solving this homogeneous equation yields
$$
w(y)=C \cosh(y) e^{\mu y}
$$
where $C$ is arbitrary. We have $w(y) \sim e^{(\mu \pm 1) y}$ as $y \to \pm \infty$
and hence the two exponential functions decay to zero as $y \to \pm \infty$ in two disjoint
sets of $\C$ for $\mu$. Hence, $w \in {\rm dom}(B_0) \subset \tilde{L}^2(\R)$ if and only if $C = 0$ for every $\mu \in \C$. We conclude that $w=0$, so $\sigma_p(B_0)=\emptyset$.
\end{proof}

\begin{lemma}
\label{lem-spectrumB0}
The residual spectrum of $B_0$ is
\begin{equation}
\label{B-0-res}
    \sigma_{\rm r}(B_0) = \left\{ \mu \in \C : \quad -1 < \Real(\mu) < 1 \right\},
\end{equation}
whereas the continuous spectrum of $B_0$ is
\begin{equation}
\label{B-0-cont}
    \sigma_{\rm c}(B_0) = \left\{ \mu \in \C : \quad \Real(\mu) = \pm 1 \right\}.
\end{equation}
\end{lemma}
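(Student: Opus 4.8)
The plan is to treat the resolvent equation $(B_0-\mu I)w=f$ as a first-order linear ODE and to decide, for each $\mu\in\C$, whether it is solvable in $\dom(B_0)=H^1(\R)\cap\tilde L^2(\R)$ for every $f\in\tilde L^2(\R)$, for $f$ in a dense subspace only, or for $f$ in a proper closed subspace only. Since $\sech(y)e^{-\mu y}$ is an integrating factor for $B_0=\partial_y-\tanh y$, the general solution of $(B_0-\mu I)w=f$ is
\begin{equation*}
w(y)=\cosh(y)e^{\mu y}\left[C+\int_0^y\sech(s)e^{-\mu s}f(s)\,ds\right],\qquad C\in\C,
\end{equation*}
and the homogeneous solution $w_h(y)=\cosh(y)e^{\mu y}$ behaves like $\tfrac12e^{(1+\mu)y}$ as $y\to+\infty$ and like $\tfrac12e^{(\mu-1)y}$ as $y\to-\infty$. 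It is also convenient to record the identity $RB_0R^{-1}=-B_0$, where $Rw(y)=w(-y)$ is unitary on $\tilde L^2(\R)$ (note $\varphi$ is even), which shows that $\sigma_{\rm r}(B_0)$ and $\sigma_{\rm c}(B_0)$ are invariant under $\mu\mapsto-\mu$ and lets us restrict to $\Real(\mu)\ge0$ in the borderline analysis.

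\emph{The open strip $-1<\Real(\mu)<1$.} Here $1\pm\Real(\mu)>0$, so $\sech(s)e^{-\mu s}f(s)$ is integrable near $\pm\infty$ for every $f\in L^2(\R)$ by Cauchy--Schwarz, while $w_h$ grows at both ends. Requiring $w\in L^2(\R)$ near $+\infty$ forces $C=-\int_0^\infty\sech(s)e^{-\mu s}f(s)\,ds$, and then requiring $w\in L^2(\R)$ near $-\infty$ forces the single scalar condition $\int_{\R}\sech(s)e^{-\mu s}f(s)\,ds=0$, i.e. $f\perp g_\mu$ where $g_\mu(y):=\sech(y)e^{-\bar\mu y}$. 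Conversely, if $f\in\tilde L^2(\R)$ and $f\perp g_\mu$, the resulting $w$ does lie in $L^2(\R)$: this is where I would use a convolution (Young) estimate rather than pointwise bounds, namely that $f\mapsto e^{ay}\int_y^0 e^{-as}f(s)\,ds$ is bounded on $L^2$ for every $a>0$. Then $w\in H^1(\R)$ follows from the equation, and $w\in\tilde L^2(\R)$ follows from the identity \eqref{B0constraint} when $\mu\ne0$; when $\mu=0$ the mean-zero condition on $w$ becomes one further closed linear constraint on $f$ (equivalently $f\perp y\,\sech y$, after an integration by parts), which again cuts out a closed codimension-one subspace. In all cases $\ran(B_0-\mu I)$ is a closed subspace of $\tilde L^2(\R)$ of codimension one, hence not dense; since $\sigma_{\rm p}(B_0)=\emptyset$ by Lemma~\ref{lem-spectrumB0c}, we conclude $\mu\in\sigma_{\rm r}(B_0)$.

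\emph{Outside the closed strip, $|\Real(\mu)|>1$.} For $\Real(\mu)>1$ I would verify that $R_\mu f(y):=-\cosh(y)e^{\mu y}\int_y^\infty\sech(s)e^{-\mu s}f(s)\,ds$ (and the mirror formula $\cosh(y)e^{\mu y}\int_{-\infty}^y\cdots$ for $\Real(\mu)<-1$) defines a bounded operator on $\tilde L^2(\R)$ — again via the convolution estimate — with range in $\dom(B_0)$, and that it is a two-sided inverse of $B_0-\mu I$. Hence $\mu\in\rho(B_0)$, so $\sigma(B_0)\subset\{\mu\in\C:|\Real(\mu)|\le1\}$.

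\emph{The lines $\Real(\mu)=\pm1$.} By the reflection symmetry it suffices to treat $\Real(\mu)=1$, and this is the delicate case, requiring us to show simultaneously that the range is dense (so $\mu\notin\sigma_{\rm r}(B_0)$) and that $B_0-\mu I$ is not bounded below (so $\mu\notin\rho(B_0)$). For density: the annihilator of $\ran(B_0-\mu I)$ equals $\ker(B_0^*-\bar\mu I)$, and a function $g\in\tilde L^2(\R)$ annihilates the range iff it solves the distributional equation $g'+\tanh(y)g+\bar\mu g=c\,\varphi$ for some $c\in\C$ (the forcing $c\varphi$ appears because the test functions $w\in\dom(B_0)$ are themselves constrained to $\tilde L^2$); solving this gives $g=D\,g_\mu-(c/\bar\mu)\varphi$, but for $\Real(\mu)=1$ the function $g_\mu=\sech(y)e^{-\bar\mu y}$ is bounded yet not in $L^2$ near $-\infty$, forcing $D=0$, and then $g\in\tilde L^2(\R)$ forces $c=0$, so $g=0$ and the range is dense. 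For the failure of the lower bound: since $|w_h(y)|$ is bounded below by a positive constant on $(-\infty,0]$, a smooth cutoff $\psi_n$ to a window $[-2n,-n]$ produces $w_n:=\psi_n w_h$ with $\|w_n\|_{L^2}^2\sim n$, whereas $(B_0-\mu I)w_n=\psi_n'w_h$ stays bounded in $L^2(\R)$; a correction by an exponentially small multiple of $\varphi$ moves $w_n$ into $\dom(B_0)$ without affecting these estimates, so $\|(B_0-\mu I)w_n\|/\|w_n\|\to0$ and $\mu\in\sigma(B_0)$. Together with $\sigma_{\rm p}(B_0)=\emptyset$ and the density of the range, this gives $\mu\in\sigma_{\rm c}(B_0)$. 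I expect this last step — the joint verification of density of the range and the construction of the singular sequence compatible with the constraint in \eqref{dot-L} — to be the main obstacle; the $L^2(\R)$ membership arguments elsewhere are routine once the Hardy/Young inequality is in place.
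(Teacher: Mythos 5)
Your proposal is correct, and for the resolvent set and the open strip it follows the paper's own skeleton: the same integrating factor and explicit solution \eqref{sol-1}, the forced choice of the constant $C$ at $+\infty$, the single extra orthogonality condition on $f$ at $-\infty$ (the paper's \eqref{constraint-f}), the special treatment of $\mu=0$ via the additional constraint (your $f\perp y\,\sech y$ is exactly the paper's \eqref{add-constraint-f} after integration by parts), and Cauchy--Schwarz/Young-type bounds for the inverse when $|\Real(\mu)|>1$. Where you genuinely diverge is at the boundary lines $\Real(\mu)=\pm 1$ and in the symmetry reduction. The paper reduces to real $\mu$ by invariance of the spectrum under vertical translations (citing Chicone--Latushkin), and at $\mu=\pm1$ it argues directly from \eqref{sol-1} that no choice of $C$ produces a decaying solution for general $f\in L^2\setminus L^1$, concluding unboundedness of the inverse; it does not explicitly verify that the range is dense, which is what separates $\sigma_{\rm c}$ from $\sigma_{\rm r}$ in the B\"uhler--Salamon classification. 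You instead use the reflection $RB_0R^{-1}=-B_0$ (legitimate, since $\varphi$ is even; and in fact your estimates depend only on $\Real(\mu)$, so no reduction is strictly needed), and on the line you prove both halves separately: density of the range by computing the annihilator within $\tilde L^2(\R)$ — correctly accounting for the constraint in \eqref{dot-L} through the forcing term $c\varphi$ in the adjoint equation, and using that $\sech(y)e^{-\bar\mu y}\notin L^2(\R)$ when $\Real(\mu)=1$ — and failure of a lower bound by a Weyl sequence $\psi_n\,\cosh(y)e^{\mu y}$ supported in $[-2n,-n]$, corrected by an exponentially small multiple of $\varphi$ to restore the constraint. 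This buys a more complete justification of the classification \eqref{B-0-cont} than the paper's argument, at the cost of the extra adjoint/annihilator computation; your remark that the range in the open strip is closed of codimension one likewise sharpens ``${\rm ran}\neq X$'' to ``range not dense,'' matching the standard definition of residual spectrum. The only caveat is bookkeeping you have already anticipated: the $\mu\neq 0$ solvability claim in the strip needs the $H^1$ and $\tilde L^2$ membership via \eqref{B0constraint}, and the Weyl sequence needs the transition regions of $\psi_n$ to have width of order $n$ so that $\|\psi_n'w_h\|_{L^2}$ stays bounded while $\|\psi_n w_h\|_{L^2}\sim\sqrt{n}$; both are routine as you indicate.
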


\begin{proof}
Let $f\in \tilde{L}^2(\R)$, $\mu \in \C$, and consider the resolvent equation $(B_0-\mu I) w = f$, i.e.
\begin{equation}
\label{resolvent-eq}
\frac{dw}{dy} - \tanh(y) w(y) - \mu w(y) = f(y).
\end{equation}
Since the spectrum $\sigma(B_0)$ is invariant under translations along the imaginary axis, 
it suffices to study equation \eqref{resolvent-eq} for $\mu\in\R$, see also Theorem 3.13 in \cite{ChiLat99}. 
In what follows, we will study for which $\mu \in \R$  the resolvent equation (\ref{resolvent-eq})
has a solution $w$ in ${\rm dom}(B_0)$. Note that, if $\mu \neq 0$ and $w \in H^1(\R)$ 
is a solution to (\ref{resolvent-eq}), then the constraint $\langle f, \varphi \rangle = 0$ implies 
$\langle w, \varphi \rangle = 0$,  so that $w \in H^1(\mathbb{R})$ implies 
$w\in {\rm dom}(B_0) = H^1(\mathbb{R}) \cap \tilde{L}^2(\mathbb{R})$. On the other hand, if $\mu = 0$
and $w \in H^1(\R)$ is a solution to (\ref{resolvent-eq}), then
the constraint $\langle w, \varphi \rangle = 0$ is needed to ensure that $w \in {\rm dom}(B_0)$.

Solving the first-order inhomogeneous equation (\ref{resolvent-eq}) by variation of parameters yields
\begin{equation}
\label{sol-1}
w(y) = \cosh(y) e^{\mu y} \left[ C + \int_0^y e^{- \mu y'} {\rm sech}(y') f(y') \d y' \right],
\end{equation}
from which we infer that $w \in H^1_{\rm loc}(\mathbb{R})$. However, we also need to consider the behavior
of $w(y)$ as $y \to \pm \infty$ to ensure that $w\in  {\rm dom}(B_0)$.

Let us first show that the half line $I_+ := \{ \mu \in \R : \;\; \mu > 1\}$ belongs
to the resolvent set of $B_0$. Since $e^{(\mu + 1) y}$ diverges as $y \to +\infty$ for every $\mu \in I_+$, we define $C$ in \eqref{sol-1} by
\begin{equation}
\label{constant-C}
C \coloneqq -\int_0^{\infty} e^{-\mu y'} {\rm sech}(y') f(y') \d y',
\end{equation}
so that the unique solution (\ref{sol-1}) can be rewritten as
\begin{equation}
\label{sol-2}
w(y) = \int_{+\infty}^y e^{\mu (y-y')} \frac{\cosh(y)}{\cosh(y')} f(y') \d y'.
\end{equation}
The following two equivalent representations will be useful in  the estimates below:
\begin{eqnarray}
\label{repr-1}
\frac{\cosh(y)}{\cosh(y')} & = & \frac{1 + e^{2y}}{1 + e^{2y'}} e^{y'-y} \\
& = & \frac{1 + e^{-2y}}{1 + e^{-2y'}} e^{y-y'}.
\label{repr-2}
\end{eqnarray}
Let $f = f \chi_{\{y > 0\}} + f \chi_{\{y < 0\}}$, where $\chi_S$ is the characteristic function on the set $S \subset \R$, and define  $w_{\pm}$ by (\ref{sol-2}) with $f$ replaced by $f \chi_{\{\pm y > 0\}}$ so that $w = w_+ + w_-$.
Using (\ref{repr-1}) for $y < 0$ and (\ref{repr-2}) for $y > 0$, we obtain
$$
\text{for } y < 0 : \quad |w_+(y)| \leq  \int_0^{+\infty} e^{-(\mu - 1) (y'-y)} |f(y')| dy'
$$
and
$$
\text{for } y > 0 : \quad |w_+(y)| \leq 2 \int_y^{+\infty} e^{-(\mu   + 1)(y'-y)} |f(y')| dy'.
$$
By the Cauchy--Schwarz inequality for $y < 0$ and
by the generalized Young's inequality for $y > 0$, we obtain
\begin{align*}
    \|w_+ \|_{L^2(\R^-)} \leq \| e^{-(\mu-1)\cdot}\|_{L^2(\R^+)} \| f \|_{L^2(\R^+)} \| e^{(\mu-1) \cdot}\|_{L^2(\R^-)}
                        \leq \frac{1}{2( \mu -1)} \|f\|_{L^2(\R^+)}
\end{align*}
and
\begin{align*}
    \|w_+ \|_{L^2(\R^+)} \leq 2 \| e^{-(\mu+1)\cdot} \|_{L^1(\R^+)}  \|f\|_{L^2(\R^+)}
                                \leq \frac{2}{\mu+1} \|f\|_{L^2(\R^+)}.
\end{align*}
On the other hand, $w_-(y) = 0$ for $y > 0$ and
$$
y < 0 : \quad |w_-(y)| \leq 2 \int_y^0 e^{-(\mu - 1)(y'-y)} |f(y')| dy',
$$
where the representation (\ref{repr-1}) has been used. By the generalized Young's inequality, we obtain
\begin{align*}
    \|w_- \|_{L^2(\R)} \leq 2 \| e^{-(\mu-1)\cdot} \|_{L^1(\R^+)}  \|f\|_{L^2(\R^-)}
                                \leq \frac{2}{\mu-1} \|f\|_{L^2(\R^-)}.
\end{align*}
Putting these bounds together yields
\begin{equation}
\label{bound-on-inverse}
    \|w\|_{L^2(\R)} \leq C_{\mu}    \|f\|_{L^2(\R)},
\end{equation}
where the constant $C_{\mu} > 0$ depends on $\mu$ and is bounded for every $\mu >1$.
Thus, we have showed that $I_+  \in \rho(B_0)$. Similarly, one can show that
$I_- := \{ \mu \in \R : \;\; \mu < -1 \}$ also belongs to the resolvent set of
$B_0$ due to the same bound (\ref{bound-on-inverse}) for every $\mu \in I_-$.
Hence, $I_+ \cup I_- \subseteqq \rho(B_0)$. It remains to show that $[-1,1]\subseteqq \sigma(B_0)$.
More precisely, we show that $\mu \in \sigma_r(B_0)$ if  $\mu \in (-1,1)$ and $\mu \in \sigma_c(B_0)$
if  $\mu =\pm1$. We use again the explicit solution $w \in H^1_{\rm loc}(\R)$  given in (\ref{sol-1}).

If $\mu \in (-1,1)$, then the exponential functions $e^{(\mu + 1) y}$ and $e^{(\mu - 1) y}$ do not decay to zero as $y \to +\infty$ and $y \to -\infty$, respectively. Therefore, to ensure decay of $w(y)$ as $y \to \pm \infty$,
the constant $C$ in (\ref{sol-1}) would have to be defined twice
\begin{equation}
\label{constant-C-two-constraints}
C = -\int_0^{\infty} e^{-\mu y'} {\rm sech}(y') f(y') \d y' = \int_{-\infty}^0 e^{-\mu y'} {\rm sech}(y') f(y') dy'.
\end{equation}
This implies that $f \in \tilde{L}^2(\R)$ would have to satisfy an additional constraint
\begin{equation}
\label{constraint-f}
\int_{\mathbb{R}} e^{-\mu y'} {\rm sech}(y') f(y') dy' = 0,
\end{equation}
which is different from $\langle f, \varphi \rangle = 0$ if $\mu \neq 0$.
Fix $\mu \in \R$ such that $\mu \in (-1,1)$ and $\mu \neq 0$.
If $f \in \tilde{L}^2(\R)$ satisfies \eqref{constraint-f}, then there exists $w \in {\rm dom}(B_0)$ solution to \eqref{resolvent-eq}, since the previous analysis has shown that the solution $w$ given by
(\ref{sol-1}) with (\ref{constant-C-two-constraints}) decays to zero at infinity.
If $f \in \tilde{L}^2(\R)$ does not satisfy (\ref{constraint-f}), then
no such solution $w \in {\rm dom}(B_0)$ exists. Hence, there exist $f\in \dot L^2(\R)$ such
that for all $w\in {\rm dom}(B_0)$ we have $(B_0-\mu I)w \neq f$, i.e. ${\rm ran}(B_0-\mu I) \subsetneq \tilde{L}^2(\R)$.
This implies that this $\mu$ belongs to $\sigma_r(B_0)$.

In the special case $\mu = 0$, the constraint (\ref{constraint-f}) coincides
with $\langle f, \varphi \rangle = 0$. For $\mu = 0$ the unique solution (\ref{sol-1})
with $C$ as in (\ref{constant-C}) can be rewritten as
\begin{equation}
\label{sol-mu0}
w(y) = \int_{\infty}^y \frac{\cosh(y)}{\cosh(y')} f(y') \d y'.
\end{equation}
If $\langle f, \varphi \rangle = 0$, then the solution (\ref{sol-mu0}) belongs to $H^1(\mathbb{R})$. 
The constraint $\langle w, \varphi \rangle = 0$, however, is satisfied only under the additional constraint
\begin{equation}
\label{add-constraint-f}
\int_{\R} \int_{\infty}^y \sech(y') f(y') \d y' \d y = 0.
\end{equation}
Therefore, for $\mu = 0$, there exists no solution $w \in {\rm dom}(B_0)$ to
the resolvent equation (\ref{resolvent-eq}) unless $f \in \tilde{L}^2(\R)$ satisfies (\ref{add-constraint-f}).
This implies again that ${\rm ran}(B_0) \subsetneq \tilde{L}^2(\mathbb{R})$ and so $0 \in \sigma_r(B_0)$.
All together  we have established that $\sigma_{\rm r}(B_0)$ is given by (\ref{B-0-res}).

Finally, if $\mu = \pm 1$, one of the two exponential functions $e^{(\mu +  1) y}$
and $e^{(\mu - 1) y}$ in \eqref{sol-1} does not decay to zero both as $y \to +\infty$ and $y \to -\infty$.
Moreover, the improper integral in (\ref{sol-1}) does not converge for $f \in L^2(\R)$, $f \notin L^1(\R)$
because  $e^{\pm y'} \sech(y') \to 2$ as $y' \to \pm \infty$.
Therefore, the solution $w$ in (\ref{sol-1}) does not decay to zero and does not belong to ${\rm dom}(B_0)$
independently on the constraint on $C$ and hence $(B_0-\mu I)^{-1} : \tilde{L}^2(\R) \to \tilde{L}^2(\R)$ is unbounded.
We conclude that such $\mu$ belongs to $\sigma_{\rm c}(B_0)$ given by (\ref{B-0-cont}).
\end{proof}

\begin{corollary}
\label{cor-A0}
The spectrum of $A_0$ completely covers the closed vertical strip given by
\begin{equation}
\label{A-0-spectrum}
\sigma(A_0) = \left\{ \lambda \in\C : \;\; -\frac{\pi}{6}\leq \Real(\lambda)\leq \frac{\pi}{6} \right\}.
\end{equation}
\end{corollary}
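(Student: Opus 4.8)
The plan is short: assemble $\sigma(B_0)$ from Lemmas~\ref{lem-spectrumB0c} and~\ref{lem-spectrumB0}, and then transfer the result to $A_0$ through the equivalence of spectral problems established in Lemma~\ref{lem-tech}. First I would record that, by Definition~\ref{def-spectrum}, the spectrum $\sigma(B_0)$ is the disjoint union of $\sigma_{\rm p}(B_0)$, $\sigma_{\rm r}(B_0)$ and $\sigma_{\rm c}(B_0)$. Lemma~\ref{lem-spectrumB0c} gives $\sigma_{\rm p}(B_0) = \emptyset$, while Lemma~\ref{lem-spectrumB0} gives $\sigma_{\rm r}(B_0) = \{ \mu \in \C : -1 < \Real(\mu) < 1 \}$ together with $\sigma_{\rm c}(B_0) = \{ \mu \in \C : \Real(\mu) = \pm 1 \}$. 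Taking the union yields the closed vertical strip $\sigma(B_0) = \{ \mu \in \C : -1 \le \Real(\mu) \le 1 \}$.

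Next I would invoke Lemma~\ref{lem-tech}. The change of variables $y = \tfrac{\pi\xi}{6}$, $z = \pi\tanh y$, $v(z) = \cosh(y)\, w(y)$ defines, up to the scalar factor $\sqrt{\pi}$, a unitary isomorphism $\dot L^2_{\rm per} \to \tilde L^2(\R)$ that carries the resolvent equation $(A_0 - \lambda I) v = g$, with $g(z) = \cosh(y)\, h(y)$, into $(B_0 - \mu I) w = \tfrac{6}{\pi} h$, where $\mu = \tfrac{6}{\pi}\lambda$ as in~\eqref{mu}. Since this is an equivalence of the full resolvent equations, each of the statements $\ker(A_0 - \lambda I) = \{0\}$, $\ran(A_0 - \lambda I) = \dot L^2_{\rm per}$, and boundedness of $(A_0 - \lambda I)^{-1}$ holds if and only if the corresponding statement holds for $B_0 - \mu I$. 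Hence $\lambda \in \rho(A_0)$ if and only if $\tfrac{6}{\pi}\lambda \in \rho(B_0)$, and therefore $\lambda \in \sigma(A_0)$ if and only if $-1 \le \tfrac{6}{\pi}\Real(\lambda) \le 1$, which is precisely the strip~\eqref{A-0-spectrum}.

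I do not expect a genuine obstacle here: the argument is pure bookkeeping. The only point that deserves a line of care is the one just flagged, namely that Lemma~\ref{lem-tech} must be used as an equivalence of resolvent equations, and not merely of eigenvalue problems, so that the residual and continuous parts of $\sigma(B_0)$ (which together make up the whole strip, the point spectrum being empty) are carried over to $\sigma(A_0)$ as well.
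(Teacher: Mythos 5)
Your proposal is correct and takes essentially the same route as the paper: the paper's proof of Corollary~\ref{cor-A0} is precisely the citation of Lemmas~\ref{lem-tech}, \ref{lem-spectrumB0c} and \ref{lem-spectrumB0}. Your extra care in reading Lemma~\ref{lem-tech} as an equivalence of the full resolvent equations (via the isometric-up-to-the-factor-$\sqrt{\pi}$ change of variables), so that the residual and continuous parts of $\sigma(B_0)$ transfer to $\sigma(A_0)$ under $\mu=\tfrac{6}{\pi}\lambda$, is exactly the intended content of that lemma.
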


\begin{proof}
The result follows from Lemmas \ref{lem-tech}, \ref{lem-spectrumB0c}, and \ref{lem-spectrumB0}.
\end{proof}

\subsection*{Step 4: Justification of the truncation.}

In this last step, we verify that the assumptions of the abstract Theorem \ref{thm-spectrumA0=A} hold for our operators.  Indeed,
by Lemmas \ref{lem-tech} and \ref{lem-spectrumB0c}, we have $\sigma_{\rm p}(A_0) = \sigma_{\rm p}(B_0) = \emptyset$.
Therefore, $\rho(A) \cap \sigma_{\rm p}(A_0) =\emptyset$. Moreover, Lemma \ref{lem-spectrum-A} states that $\sigma_{\rm p}(A) = \{ 0 \}$, 
hence Corollary \ref{cor-A0} implies that  $\rho(A_0) \cap \sigma_{\rm p}(A) =\emptyset$. Therefore, we may conclude from Theorem \ref{thm-spectrumA0=A}  that $\sigma(A) = \sigma(A_0)$, which together with (\ref{A-0-spectrum}) yields (\ref{spectrum}). This finishes the proof of Theorem \ref{thm-mainresult}.

\begin{remark}
\label{rem_FloquetBloch}
We can generalize  our instability result from co-periodic perturbations to \emph{subharmonic} and  \emph{localized perturbations} by analysing the Floquet-Bloch spectrum. In particular, we find that the spectrum of $A$ remains \emph{invariant} with respect to the Floquet exponent $k$ in the following decomposition:
\begin{equation*}
    v(z) = e^{i k z} p(z),
\end{equation*}
where $p(z+2\pi)=p(z)$ and $k\in[-\frac{1}{2},\frac{1}{2}]$. By setting  $z=\pi \tanh(y)$, $v(z)=\cosh(y) w(y)$ as in Lemma \ref{lem-tech} we rewrite the resolvent equation \eqref{resolvent-eq}  in the following form:
\begin{equation*}
    \frac{d q}{dy} - \tanh(y)  q(y) +i k \pi \sech^2(y) q(y) - \mu q(y) = g(y),
\end{equation*}
with $q(y)=e^{-ik\pi \tanh(y)}w(y)$ and $g(y)=e^{-ik\pi \tanh(y)}f(y)$. The general solution of this differential equation is obtained from \eqref{sol-1}
and given by
\begin{equation*}
q(y) = \cosh(y) e^{\mu y-ik\pi\tanh(y)} \left[ C + \int_0^y e^{- \mu y'+ik\pi\tanh(y')} {\rm sech}(y') g(y') \d y' \right].
\end{equation*}
Since $k$ is real, the analysis of this  solution is exactly the same as that of \eqref{sol-1} in the proof of Lemma \ref{lem-spectrumB0}. The estimates are independent of $k$, therefore the spectrum of the linearized operator $A$ remains the same when the co-periodic perturbations are replaced by subharmonic or localized perturbations.
\end{remark}

\begin{remark}
\label{remark-tilde}
If the constraint in (\ref{dot-L}) is dropped, one can define the differential operator
$\tilde{B}_0 : H^1(\R) \subset L^2(\R) \to L^2(\R)$, where $\tilde{B}_0$ has the same differential
expression as $B_0$ in (\ref{eq-tildeL1}). The proofs of Lemmas \ref{lem-spectrumB0c} and \ref{lem-spectrumB0}
are extended with little modifications to show that $\sigma_{\rm p}(\tilde{B}_0) = \emptyset$,
$\sigma_{\rm r}(\tilde{B}_0) = \sigma_{\rm r}(B_0)$, and $\sigma_{\rm c}(\tilde{B}_0) = \sigma_{\rm c}(B_0)$.
In addition, the same location of the spectrum of $\tilde{B}_0$ follows by Lemma 6.2.6 in \cite{BS18}.
Indeed, the adjoint operator $\tilde{B}_0^* : H^1(\R) \subset L^2(\R) \to L^2(\R)$ is defined by
\begin{equation*}
(\tilde{B}_0^* w)(y) \coloneqq -\partial_y w(y) - \tanh(y) w(y), \quad y \in \R
\end{equation*}
and the exact solution of the differential equation
\begin{equation*}
-\frac{dw}{dy} - \tanh(y) w(y) = \mu w(y)
\end{equation*}
is given by
\begin{equation*}
w(y) = C e^{-\mu y} {\rm sech}(y),
\end{equation*}
where $C$ is arbitrary. From the decay of exponential functions, we verify directly that
$\sigma_{\rm p}(\tilde{B}_0^*)$ is given by (\ref{B-0-res}) and
$\sigma_{\rm c}(\tilde{B}_0^*)$ is given by (\ref{B-0-cont}).
However, since $\sigma_{\rm p}(\tilde{B}_0) = \emptyset$, Lemma 6.2.6 in \cite{BS18}
implies that $\sigma_{\rm r}(\tilde{B}_0^*) = \emptyset$,
$\sigma_{\rm p}(\tilde{B}_0^*) = \sigma_{\rm r}(\tilde{B}_0)$, and $\sigma_{\rm c}(\tilde{B}_0^*) = \sigma_{\rm c}(\tilde{B}_0)$,
which is in agreement with the location of $\sigma_{\rm r}(\tilde{B}_0)$ and $\sigma_{\rm c}(\tilde{B}_0)$ obtained from direct computation.
\end{remark}

\section{Proof of Theorem \ref{thm-mainresult-mod}}
For the peaked periodic wave $U_*$ in (\ref{eq-peakedwave-mod}) in the case $p = 2$,
we write explicitly
\begin{equation}
\label{representation-2}
c_* - U_*^2(z) = \frac{1}{2} |z| \left( \pi - |z| \right), \quad z \in [-\pi,\pi].
\end{equation}
The eigenvector for $0 \in \sigma_{\rm p}(A)$ is given by
\begin{equation}
\label{eigenvector-2}
U_*'(z) = \frac{1}{\sqrt{2}} {\rm sign}(z), \quad z \in (-\pi,\pi).
\end{equation}
We follow the same four  steps as in the proof of Theorem \ref{thm-mainresult}. Note that now there
exist two peaks of the periodic wave (\ref{eq-peakedwave-mod}) on the $2\pi$-period:
one is located at $z = \pm \pi$ and the other one is located at $z = 0$. This modifies the proofs
of Lemmas \ref{lem-spectrum-A} and \ref{lem-tech} in Steps 1 and 2, whereas Steps 3 and 4 are exactly as in the case $p=1$.

\subsection*{Step 1: Point spectrum of $\boldsymbol A$.}

The following lemma is an adaptation of Lemma \ref{lem-spectrum-A} for the case $p = 2$.

\begin{lemma}
\label{lem-spectrum-A-mod}
$\sigma_{\rm p}(A) = \{0\}$
\end{lemma}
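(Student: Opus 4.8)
The plan is to mimic the structure of the proof of Lemma \ref{lem-spectrum-A}, but carefully account for the two singular points $z=0$ and $z=\pm\pi$ of the coefficient $c_*-U_*^2$ in (\ref{representation-2}). First I would note, as before, that any $f\in{\rm dom}(A)$ lies in $H^1(-\pi,\pi)\hookrightarrow C^0(-\pi,\pi)$, and that bootstrapping in $Af=\lambda f$ gives smoothness of $f$ on each of the open subintervals $(-\pi,0)$ and $(0,\pi)$ where the coefficient $|z|(\pi-|z|)$ does not vanish. Differentiating $Af=\lambda f$ once in $z$ on each subinterval yields a second-order linear ODE with a regular-singular structure at $z=0$ and $z=\pi$ (and $z=-\pi$). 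So on $(0,\pi)$ we get, using $c_*-U_*^2=\tfrac12 z(\pi-z)$,
\begin{equation*}
\tfrac12 z(\pi-z) f''(z) + \tfrac12(\pi-2z)f'(z) - \lambda f'(z) + f(z) = 0 ,
\end{equation*}
and the mirror equation on $(-\pi,0)$. As with $p=1$, one linearly independent solution should again be available in closed form (an affine-type function adapted to the ODE), and the second one obtained by reduction of order, producing an explicit formula for $f_2'$ of the form (constant)$/[z^2(\pi-z)^2]$ times a power-type factor $\big(\tfrac{z}{\pi-z}\big)^{\alpha}$ with $\alpha$ proportional to $\lambda$, analogous to (\ref{g-explicit}).

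The key step is then the local analysis at the two singular points. At $z=\pi$ (and $z=-\pi$), exactly as in Lemma \ref{lem-spectrum-A}, I would show that the second solution $f_2$ forces $(c_*-U_*^2)f_2'\notin L^2$ near $\pm\pi$ unless its coefficient vanishes, which rules out $f_2$ from ${\rm dom}(A)$. The genuinely new ingredient is the matching at the interior peak $z=0$: a function in ${\rm dom}(A)$ need only be continuous at $0$ (finite jump of the derivative is allowed, since $c_*-U_*^2$ vanishes there), but we must still ensure $\partial_z[(c_*-U_*^2)f]\in L^2$ near $z=0$ from both sides. So the admissible solution on $(-\pi,\pi)$ is obtained by gluing the closed-form solution on $(-\pi,0)$ to the one on $(0,\pi)$, requiring only continuity at $z=0$ and discarding the reduction-of-order solution on each side because of its non-$L^2$ singularity at $z=0$ as well (the $z^{-2}$-type blow-up of $f_2'$ makes $(c_*-U_*^2)f_2'\sim z^{-1}\notin L^2$). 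I expect this local computation at $z=0$ — verifying that only the explicit solution survives in ${\rm dom}(A)$ there — to be the main obstacle, since one must track the precise exponents of the regular-singular point and check the $L^2$ condition on $\partial_z[(c_*-U_*^2)f]$ rather than merely on $f$.

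Once the admissible solution is pinned down to a two-parameter family (one closed-form solution on each side, matched continuously at $0$), I would impose the two remaining constraints built into ${\rm dom}(A)\subset\dot L^2_{\rm per}$: periodicity, i.e.\ $f(-\pi)=f(\pi)$ together with the matching of $(c_*-U_*^2)f$ and its derivative across $z=\pm\pi$, and the zero-mean condition $\int_{-\pi}^\pi f\,dz=0$. As in the $p=1$ case, I expect these constraints to be incompatible with $f\neq0$ unless $\lambda=0$, in which case the surviving solution is (a multiple of) $U_*'(z)=\tfrac1{\sqrt2}{\rm sign}(z)$ in (\ref{eigenvector-2}), which indeed lies in ${\rm dom}(A)$ and has zero mean. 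Hence $\sigma_{\rm p}(A)=\{0\}$, completing the proof of Lemma \ref{lem-spectrum-A-mod}.
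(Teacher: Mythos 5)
Your overall strategy (reduce $Af=\lambda f$ to an ODE on each half-interval, do local analysis at the singular points, then impose the remaining constraints) is the paper's, but two steps as you propose them are genuinely wrong. First, your differentiated equation is miscomputed. Writing $h:=c_*-U_*^2=\tfrac12 z(\pi-z)$ on $(0,\pi)$, differentiating $\partial_z[hf]+\partial_z^{-1}f=\lambda f$ gives $h f''+2h'f'+h''f+f=\lambda f'$, and since $h''=-1$ the zeroth-order terms \emph{cancel}: the correct equation is $|z|(\pi-|z|)f''+2\,{\rm sign}(z)(\pi-2|z|)f'=2\lambda f'$, i.e.\ \eqref{ode-spectrum-mod}, which is first order in $f'$ and integrates directly to \eqref{g-explicit-mod}. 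Your equation $\tfrac12 z(\pi-z)f''+\tfrac12(\pi-2z)f'-\lambda f'+f=0$ has the wrong coefficient of $f'$ and a spurious $+f$; the ``affine-type first solution plus reduction of order'' machinery you plan to use is an artifact of that error. In the correct setting the solutions with $g_\pm=0$ are simply constants $f_\pm$ on each half-interval, and the singular-point analysis consists of checking that $|z|(\pi-|z|)f'\notin L^2$ near $0$ and $\pm\pi$ unless $g_\pm=0$.

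Second, and more seriously, your matching conditions at the peaks are too strong. Membership in ${\rm dom}(A)$ only requires $\partial_z[(c_*-U_*^2)f]\in\dot L^2_{\rm per}$, which permits finite jumps of $f$ \emph{itself} at every point where $c_*-U_*^2$ vanishes (see Remark \ref{rem-domain}); for $p=2$ these are $z=0$ and $z=\pm\pi$, so $f$ lies only in $H^1(-\pi,0)\cap H^1(0,\pi)$, not in $H^1(-\pi,\pi)$, and neither continuity at $z=0$ nor $f(-\pi)=f(\pi)$ may be imposed. Indeed the actual eigenfunction $U_*'=\tfrac1{\sqrt2}{\rm sign}(z)$ from \eqref{eigenvector-2} is discontinuous at $0$ and at $\pm\pi$, so your gluing requirement would exclude it and your argument would (incorrectly) yield $\sigma_{\rm p}(A)=\emptyset$, contradicting your own final sentence and $AU_*'=0$. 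The only constraints available are $L^2$-integrability of $\partial_z[(c_*-U_*^2)f]$ near the peaks and the zero-mean condition, which force $f=f_0\,{\rm sign}(z)$; you are then still missing the step that pins down $\lambda$: the paper observes that for this $f$ the function $Af$ is even in $z$ while $\lambda f$ is odd, hence $\lambda=0$.
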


\begin{proof}
If $f \in {\rm dom}(A)$, then $f \in H^1(-\pi,0) \cap H^1(0,\pi)$ so that
$f \in C^0(-\pi,0) \cap C^0(0,\pi)$ by Sobolev embedding. Bootstrapping arguments for $A f = \lambda f$
immediately yield that $f \in C^{\infty}(-\pi,0) \cap C^{\infty}(0,\pi)$. Hence, the spectral problem
$A f = \lambda f$ for $f \in {\rm dom}(A)$ can be differentiated once in $z$ on $(-\pi,0)$ and $(0,\pi)$
to yield the second-order differential equation
\begin{equation}
\label{ode-spectrum-mod}
|z| (\pi - |z|) f''(z) + 2 {\rm sign}(z) (\pi - 2 |z|) f'(z) = 2 \lambda f'(z), \quad z \in (-\pi,0) \cup (0,\pi).
\end{equation}
Integrating (\ref{ode-spectrum-mod}) separately for $\pm z \in (0,\pi)$ yields
\begin{equation}
\label{g-explicit-mod}
f'(z) = \frac{g_{\pm}}{z^2 (\pi - |z|)^2} \left( \frac{z}{\pi - |z|} \right)^{\pm \frac{2\lambda}{\pi}}, \quad \pm z \in (0,\pi),
\end{equation}
where $g_{\pm}$ are constants of integration. Computing the limits $z \to 0$ and $z \to \pm \pi$
similarly to the proof of Lemma \ref{lem-spectrum-A} shows that $|z| (\pi - |z|) f'(z)$ belongs to $L^2(-\pi,0) \cap L^2(0,\pi)$ if and only if
$g_+ = g_- = 0$. In this case, $f(z) = f_{\pm}$ for $\pm z \in (0,\pi)$ with constant $f_{\pm}$
and the zero-mass constraint $\int_{-\pi}^{\pi} f(z) dz = 0$ required
for $f \in \dot{L}^2_{\rm per}$ yields $f_{\pm} = \pm f_0$ with only one scaling constant $f_0$.
Hence the only solution of $Af = \lambda f$ with $f \in {\rm dom}(A) \subset \dot{L}^2_{\rm per}$ is given by
$f(z) = f_0 \, {\rm sign}(z) = \sqrt{2} f_0 U_*'(z)$ given by (\ref{eigenvector-2}). 
Inspecting $A$ in (\ref{eq-L}) with $p = 2$ shows
that $(Af)(z)$ is even in $z$, whereas $\lambda f(z)$ is odd in $z$. Hence, $\lambda = 0$
is the only admissible value of $\lambda$ for this solution.
No other $\lambda \in \C$ exists  such that a nonzero solution $f$ of
$A f = \lambda f$ belongs to ${\rm dom}(A) \subset \dot{L}^2_{\rm per}$.
\end{proof}

\subsection*{Step 2: Truncation of $\boldsymbol A$.}
By using (\ref{representation-2}), $A_0$ in (\ref{eq-L1}) is rewritten in the explicit form
\begin{equation}
\label{eq-L1-explicit 2}
(A_0 v)(z) = \tfrac{1}{2} \partial_z \left[ |z| (\pi-|z|) v(z) \right], \quad z \in (-\pi,\pi).
\end{equation}
The explicit expression (\ref{representation-2}) in the transformation formula (\ref{coor-transf}) for $p=2$ yields
\begin{equation}
\label{z-xi-mod}
    \frac{\d z}{\d \xi} = \tfrac{1}{2} |z| (\pi - |z|).
\end{equation}
Both $z = \pm \pi$ and $z = 0$ are critical points
of (\ref{z-xi-mod}), so the interval $[-\pi,\pi]$
cannot be mapped bijectively to $\mathbb{R}$ as in the case $p=1$. However, we are able to
map the half-intervals $[-\pi,0]$ and $[0,\pi]$ between the two peaks
separately to $\mathbb{R}$. These maps are given explicitly as the solutions of (\ref{z-xi-mod}) by
\begin{equation}
\label{z-y-mod}
z = z_+(\xi) \coloneqq \frac{\pi e^{\frac{\pi \xi}{2}}}{1 + e^{\frac{\pi \xi}{2}}}
\end{equation}
 for $z\in [0,\pi]$, and
\begin{equation}
\label{z-y-mod2}
z = z_-(\xi) \coloneqq -\frac{\pi}{1+e^{\frac{\pi \xi}{2}}},
\end{equation}
 for $z\in [-\pi,0]$, where the constants of integration are defined without loss of generality from the conditions
$z_{\pm}(0) = \pm \frac{\pi}{2}$. The following is an adaptation of Lemma \ref{lem-tech}
when $p = 2$.

\begin{lemma}
\label{lem-tech-mod}
The spectral problem $A_0 v = \lambda v$ with $A_0 : {\rm dom}(A_0) \subset \dot{L}^2_{\rm per} \to \dot L^2_{\rm per}$
given by (\ref{eq-L1-explicit 2}) is equivalent to the spectral problem $B_0 w = \mu w$ with
\begin{equation}
\label{mu-mod}
\mu = \frac{4}{\pi} \lambda,
\end{equation}
where $B_0 : {\rm dom}(B_0) \subset \tilde{L}^2(\R) \to \tilde{L}^2(\R)$ is the same linear operator as
is given in (\ref{eq-tildeL1}) with the domain (\ref{dom-B-0}).
\end{lemma}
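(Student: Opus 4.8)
The plan is to repeat the change of variables of Lemma~\ref{lem-tech}, the new feature being that the weight $c_*-U_*^2=\tfrac12|z|(\pi-|z|)$ now vanishes at the \emph{two} distinct points $z=0$ and $z=\pm\pi$ of the $2\pi$-torus. Since $A_0$ in (\ref{eq-L1-explicit 2}) is first order and its coefficient vanishes at both peaks, it does not couple the restrictions $v_+:=v|_{(0,\pi)}$ and $v_-:=v|_{(-\pi,0)}$. I would first make this precise by checking that, for the same reason, the domain requirement $\partial_z[\,|z|(\pi-|z|)v\,]\in\dot{L}^2_{\rm per}$ from (\ref{dom-A}) produces no $\delta$-contribution at the peaks, hence no matching condition between the two arcs: the only link is the single zero-mean constraint $\int_{-\pi}^{\pi}v\,\d z=0$.

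On each half-interval I would use the explicit solutions (\ref{z-y-mod})--(\ref{z-y-mod2}) of (\ref{z-xi-mod}), reparametrised by $y:=\tfrac{\pi\xi}{4}$, which give
\begin{equation*}
z=\tfrac{\pi}{2}(1+\tanh y)\quad\text{on}\quad(0,\pi),\qquad
z=\tfrac{\pi}{2}(\tanh y-1)\quad\text{on}\quad(-\pi,0),
\end{equation*}
in both cases with $\tfrac{\d z}{\d y}=\tfrac{\pi}{2}\sech^2 y$ and $|z|(\pi-|z|)=\tfrac{\pi^2}{4}\sech^2 y$. Writing $v(z)=\cosh(y)\,w(y)$ on each arc (with $w=w_\pm$), a computation identical in spirit to Lemma~\ref{lem-tech} gives $(A_0v)(z)=\tfrac{\pi}{4}\cosh(y)\,(B_0w)(y)$, where $B_0$ is exactly the operator (\ref{eq-tildeL1}); hence $A_0v=\lambda v$ becomes $B_0w_\pm=\mu w_\pm$ on each copy of $\R$ with $\mu=\tfrac{4}{\pi}\lambda$, as in (\ref{mu-mod}). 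The same substitution and Jacobian transform the norms as $\|v\|_{L^2(-\pi,\pi)}^2=\tfrac{\pi}{2}(\|w_+\|_{L^2(\R)}^2+\|w_-\|_{L^2(\R)}^2)$ and $\|A_0v\|_{L^2(-\pi,\pi)}^2=\tfrac{\pi^3}{32}(\|B_0w_+\|_{L^2(\R)}^2+\|B_0w_-\|_{L^2(\R)}^2)$, so that, arguing as for (\ref{dom-B-0}), $v\in\dom(A_0)$ if and only if $w_\pm\in H^1(\R)$, while $\int_{-\pi}^{\pi}v\,\d z=0$ becomes the \emph{single} coupling constraint $\langle w_++w_-,\varphi\rangle=0$ with $\varphi=\sech$. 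As in (\ref{B0constraint}) each $\langle B_0w_\pm,\varphi\rangle$ vanishes automatically on $H^1(\R)$, so the constraint is consistent, and $A_0$ is unitarily equivalent, up to the scaling $\mu=\tfrac{4}{\pi}\lambda$, to the direct sum $\tilde{B}_0\oplus\tilde{B}_0$ of two copies of the unconstrained operator $\tilde{B}_0$ of Remark~\ref{remark-tilde}, restricted to the codimension-one subspace $\{(w_+,w_-)\in L^2(\R)^2:\langle w_++w_-,\varphi\rangle=0\}$.

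To finish I would show this codimension-one restriction does not change the spectrum: running the resolvent analysis of Lemma~\ref{lem-spectrumB0} componentwise, $\ker(\tilde{B}_0-\mu I)=\{0\}$ on each copy, for $|\Real(\mu)|>1$ the pair of resolvents of $\tilde{B}_0$ solves $(\tilde{B}_0-\mu I)w_\pm=g_\pm$ and automatically satisfies $\langle w_++w_-,\varphi\rangle=-\mu^{-1}\langle g_++g_-,\varphi\rangle=0$, and for $|\Real(\mu)|\le1$ the range still fails to be dense exactly as for a single $B_0$; combined with Remark~\ref{remark-tilde}, the transformed problem thus has the same spectrum as $B_0w=\mu w$, which is the asserted equivalence. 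Steps~3 and~4 then proceed verbatim as for $p=1$, giving $\sigma(A_0)=\{\lambda\in\C:-\tfrac{\pi}{4}\le\Real(\lambda)\le\tfrac{\pi}{4}\}$ and, via Theorem~\ref{thm-spectrumA0=A}, Theorem~\ref{thm-mainresult-mod}.

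The main obstacle, relative to $p=1$, is the second peak: there a single diffeomorphism maps the whole torus onto $\R$, whereas here the picture is two-sheeted, so one must confirm that (\ref{dom-A}) genuinely decouples the two arcs and then argue that the lone surviving mean-zero constraint, which couples the two copies of $B_0$, is spectrally harmless. Everything else is the routine bookkeeping of the change of variables already done for $p=1$.
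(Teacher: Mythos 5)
Your proof follows the paper's change of variables exactly: the same maps (\ref{z-y-mod})--(\ref{z-y-mod2}) with $y=\tfrac{\pi\xi}{4}$, the same substitution $v(z_\pm)=\cosh(y)w_\pm(y)$, the same scaling $\mu=\tfrac{4}{\pi}\lambda$, and the same transformation of the zero-mean constraint into $\langle w_++w_-,\varphi\rangle=0$; your preliminary point that the vanishing of $c_*-U_*^2$ at both peaks decouples the two arcs (no delta contribution, hence no matching condition beyond the single mean-zero constraint) is correct and is implicit in the paper's treatment, which simply works on the two half-intervals separately. Where you genuinely diverge is the final identification. The paper sets $w:=w_++w_-$ and, using (\ref{B0constraint}) and linear superposition, declares the problem equivalent to the single spectral problem $B_0w=\mu w$ on $\tilde{L}^2(\R)$; you instead keep the pair $(w_+,w_-)$, identify $A_0$ (up to the scaling $\tfrac{\pi}{4}$ and a constant-multiple-of-unitary map) with $\tilde{B}_0\oplus\tilde{B}_0$ restricted to the codimension-one subspace $\langle w_++w_-,\varphi\rangle=0$, and then re-run the resolvent analysis of Lemma \ref{lem-spectrumB0} componentwise, checking that the constraint is preserved by the resolvent for $|\Real(\mu)|>1$ and that the range defect persists in the open strip. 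This costs an extra componentwise verification but buys an honest isomorphism of operators, and it makes explicit why the non-injective summation $v\mapsto w_++w_-$ (which forgets the difference $w_+-w_-$) does not distort the residual and continuous spectrum or the resolvent bounds --- a point the paper's shorter argument passes over; since the Lemma is only used to conclude Corollary \ref{cor-A0-mod}, either route suffices. One small imprecision in your last step: for $\Real(\mu)=\pm1$ the obstruction is not a failure of the range to be dense but unboundedness of the inverse, exactly as in the single-copy case of Lemma \ref{lem-spectrumB0}; this does not affect your conclusion that the boundary of the strip lies in the spectrum, hence in $\sigma(A_0)$ after rescaling.
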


\begin{proof}
First, we consider the problem on the half-interval $[0,\pi]$.
By setting $y \coloneqq \frac{\pi}{4} \xi$ and $v(z_+) = \cosh(y) w_+(y)$, we obtain
by the substitution rule and using \eqref{z-y-mod} that
\begin{align*}
    \int_{0}^{\pi}v^2(z) \d z
    = \frac{\pi}{2} \int_{-\infty}^{\infty}v^2(z_+) \sech^2(y)\d y = \frac{\pi}{2} \int_{-\infty}^{\infty}w_+^2(y) \d y,
\end{align*}
hence $v \in L^2(0,\pi)$ if and only if $w_+\in L^2(\R)$. Similarly,
we verify that
$$
\partial_z \left[ z(\pi-z) v \right] \in L^2(0,\pi)
$$
if and only if
$$
\partial_y w_+ - \tanh(y) w_+ \in L^2(\R).
$$
Next, we consider the problem on the half-interval $[-\pi,0]$.
By setting $v(z_-) = \cosh(y) w_-(y)$ and using \eqref{z-y-mod2}, we obtain by the same computations
that $v \in L^2(-\pi,0)$ if and only if $w_-\in L^2(\R)$, whereas
$$
\partial_z \left[ z(\pi + z) v \right] \in L^2(-\pi,0)
$$
if and only if
$$
\partial_y w_- - \tanh(y) w_- \in L^2(\R).
$$
The zero-mean constraint in $\dot L^2_{\rm per}$ is transformed as follows:
\begin{align*}
        0 =\int_{-\pi}^{\pi} v(z) dz
            & = \frac{\pi}{2} \int_{\R} \left[ v(z_-)+v(z_+) \right] \sech^2(y) dy \\
            & = \frac{\pi}{2} \int_{\R} \left[ w_-(y)+w_+(y) \right] \sech(y) dy.
\end{align*}
Therefore, $v\in \dot L^2_{\rm per}$ if and only if $w \in \tilde{L}^2(\mathbb{R})$,
where $w \coloneqq w_+ + w_-$ and $\tilde{L}^2(\mathbb{R})$ is defined by (\ref{dot-L}).
In view of \eqref{B0constraint} we find that   $B_0 w\in \tilde{L}^2(\R)$ for $w=w_+ + w_- \in H^1(\R)$.
Considering the differential equation
$A_0 v = \lambda v$ on the half-intervals $[-\pi,0]$ and $[0,\pi]$,  we use the relations
$v(z_{\pm})=\cosh(y)w_{\pm}(y)$, the chain rule, and the transformation formula \eqref{mu-mod}
to obtain the equation $B_0 w_{\pm} = \mu w_{\pm}$, where the differential expression for $B_0$ is given by (\ref{eq-tildeL1}).
By the linear superposition principle, $w \in {\rm dom}(B_0) \subset \tilde{L}^2(\mathbb{R})$
defined by (\ref{dom-B-0}) satisfies the same equation $B_0 w = \mu w$ as $w_+$ and $w_-$.
Hence, the spectral problems for $A_0$ and $B_0$ are equivalent to each other and
the spectral parameters $\lambda$ and $\mu$ are related by the transformation formula (\ref{mu-mod}).
\end{proof}

\subsection*{Step 3: Spectrum of the truncated operator $\boldsymbol{A_0}$.}
Since the operator $B_0$ in Lemma \ref{lem-tech-mod} is identical with the one in Lemma \ref{lem-tech}, the results of Lemma \ref{lem-spectrumB0c} and \ref{lem-spectrumB0} apply directly to the case $p=2$ and give the following result.

\begin{corollary}
\label{cor-A0-mod}
The spectrum of $A_0$ completely covers the closed vertical strip given by
\begin{equation}
\label{A-0-spectrum-mod}
\sigma(A_0) = \left\{ \lambda \in\C : \;\; -\frac{\pi}{4}\leq \Real(\lambda)\leq \frac{\pi}{4} \right\}.
\end{equation}
\end{corollary}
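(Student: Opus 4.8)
The plan is to follow verbatim the argument used for Corollary~\ref{cor-A0} in the case $p=1$. By Lemma~\ref{lem-tech-mod}, the spectral problem $A_0 v = \lambda v$ on $\dot L^2_{\rm per}$ is equivalent to the spectral problem $B_0 w = \mu w$ on $\tilde L^2(\R)$, where $B_0$ is exactly the operator \eqref{eq-tildeL1} with domain \eqref{dom-B-0} already analysed in Section~3, and where the two spectral parameters are tied by the affine relation $\mu = \tfrac{4}{\pi}\lambda$ from \eqref{mu-mod}. Hence the spectrum of $A_0$ is obtained from that of $B_0$ by the inverse scaling $\lambda = \tfrac{\pi}{4}\mu$, and it suffices to collect what is already known about $\sigma(B_0)$.

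First I would assemble $\sigma(B_0)$ from the results of Section~3: Lemma~\ref{lem-spectrumB0c} gives $\sigma_{\rm p}(B_0) = \emptyset$, and Lemma~\ref{lem-spectrumB0} gives $\sigma_{\rm r}(B_0) = \{\mu \in \C : -1 < \Real(\mu) < 1\}$ together with $\sigma_{\rm c}(B_0) = \{\mu \in \C : \Real(\mu) = \pm 1\}$. Taking the union yields the closed vertical strip $\sigma(B_0) = \{\mu \in \C : -1 \le \Real(\mu) \le 1\}$. Applying $\lambda = \tfrac{\pi}{4}\mu$ transforms this strip into $\{\lambda \in \C : -\tfrac{\pi}{4} \le \Real(\lambda) \le \tfrac{\pi}{4}\}$, which is precisely \eqref{A-0-spectrum-mod}; along the way one records that $\sigma_{\rm p}(A_0) = \emptyset$, that $\sigma_{\rm r}(A_0)$ is the corresponding open strip, and that $\sigma_{\rm c}(A_0)$ consists of its two boundary lines.

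The only point that deserves attention --- rather than a genuine obstacle --- is that Lemma~\ref{lem-tech-mod} must transfer the full resolvent structure of $A_0$, not merely the eigenvalue equation, even though the two peaks of \eqref{eq-peakedwave-mod} force the change of variables to be built separately on $[-\pi,0]$ and $[0,\pi]$ through \eqref{z-y-mod} and \eqref{z-y-mod2}. I would point out that the map $v \mapsto w = w_+ + w_-$ constructed in that lemma is, up to the harmless constant $\tfrac{\pi}{2}$, an isomorphism of $\dot L^2_{\rm per}$ onto $\tilde L^2(\R)$ which carries $\dom(A_0)$ onto $\dom(B_0)$ and conjugates $A_0$ to $\tfrac{\pi}{4} B_0$; a similarity of this kind preserves the resolvent set and each of $\sigma_{\rm p}$, $\sigma_{\rm r}$, $\sigma_{\rm c}$ under the substitution $\mu = \tfrac{4}{\pi}\lambda$. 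Granting this, the corollary is immediate, since $B_0$ is literally unchanged from the case $p=1$ and Lemmas~\ref{lem-spectrumB0c} and \ref{lem-spectrumB0} apply word for word; all the analytic content of Step~3 has therefore already been discharged in Section~3, and no further work is needed.
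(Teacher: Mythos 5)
Your overall route is exactly the paper's Step~3: invoke Lemma~\ref{lem-tech-mod}, reuse Lemmas~\ref{lem-spectrumB0c} and \ref{lem-spectrumB0} for $\sigma(B_0)$, and rescale by $\mu=\tfrac{4}{\pi}\lambda$. However, the justification you add in your last paragraph is false: the map $v \mapsto w = w_+ + w_-$ is \emph{not} an isomorphism of $\dot L^2_{\rm per}$ onto $\tilde L^2(\R)$ and does not conjugate $A_0$ to $\tfrac{\pi}{4}B_0$. Indeed, from \eqref{z-y-mod} and \eqref{z-y-mod2} one has $z_-(\xi)=z_+(\xi)-\pi$, so that $w(y)=\sech(y)\left[v\bigl(z_+\bigr)+v\bigl(z_+-\pi\bigr)\right]$; hence every $v\in\dot L^2_{\rm per}$ satisfying $v(z)=-v(z-\pi)$ (for example $v(z)=\sin z$) is a nonzero element of the kernel of this map, which is therefore infinite--dimensional. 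Accordingly the norm identity produced by the change of variables is $\|v\|_{L^2(-\pi,\pi)}^2=\tfrac{\pi}{2}\bigl(\|w_+\|_{L^2(\R)}^2+\|w_-\|_{L^2(\R)}^2\bigr)$, not $\tfrac{\pi}{2}\|w\|_{L^2(\R)}^2$, and the same failure applies to the claim that $\dom(A_0)$ is carried onto $\dom(B_0)$. What the construction of Lemma~\ref{lem-tech-mod} really provides is an isomorphism $v\mapsto(w_+,w_-)$ onto the subspace of $L^2(\R)\oplus L^2(\R)$ cut out by the single constraint $\langle w_++w_-,\varphi\rangle=0$, under which $A_0$ corresponds to $\tfrac{\pi}{4}$ times the diagonal action of the differential expression \eqref{eq-tildeL1}; a similarity with a single copy of $B_0$ is not available, so the transfer of the residual/continuous spectrum cannot be argued the way you state it.

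The conclusion \eqref{A-0-spectrum-mod} is nevertheless correct, and the gap can be closed in either of two ways. Either observe, as the paper implicitly does, that the resolvent analysis of Lemma~\ref{lem-spectrumB0} applies verbatim componentwise to the pair $(w_+,w_-)$: for $|\Real(\mu)|>1$ the bound \eqref{bound-on-inverse} holds for each component and the joint constraint is propagated exactly as in that lemma, while for $|\Real(\mu)|\leq 1$ the obstructions to solving the resolvent equation already occur in a single component (take $f$ supported on one half-interval). Or decompose $\dot L^2_{\rm per}$ into the two $A_0$-invariant subspaces of $\pi$-periodic and $\pi$-antiperiodic functions (the coefficient $c_*-U_*^2$ is $\pi$-periodic); on the first, $w_+=w_-$ and $A_0$ is conjugate to $\tfrac{\pi}{4}B_0$ on $\tilde L^2(\R)$, while on the second the zero-mean constraint is automatic and $A_0$ is conjugate to $\tfrac{\pi}{4}\tilde B_0$ on $L^2(\R)$, which by Remark~\ref{remark-tilde} has the same spectrum; the union again gives the closed strip. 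If you simply cite the equivalence asserted in Lemma~\ref{lem-tech-mod} together with Lemmas~\ref{lem-spectrumB0c} and \ref{lem-spectrumB0}, you reproduce the paper's one-line proof; the ``isomorphism/conjugation'' argument as written should be removed or replaced by one of the corrections above.
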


\subsection*{Step 4: Justification of the truncation.}

In this last step, we verify that the assumptions of the abstract Theorem \ref{thm-spectrumA0=A} hold also in the case $p=2$.
Since $\sigma_{\rm p}(A_0)=\emptyset$,  $\rho(A) \cap \sigma_{\rm p}(A_0) =\emptyset$. Furthermore,
Lemma \ref{lem-spectrum-A-mod} states that $\sigma_{\rm p}(A) = \{ 0 \}$, hence Corollary \ref{cor-A0-mod} 
implies that $\rho(A_0) \cap \sigma_{\rm p}(A) =\emptyset$.
Therefore, we may conclude from Theorem \ref{thm-spectrumA0=A}  that $\sigma(A) = \sigma(A_0)$, which together with (\ref{A-0-spectrum-mod}) yields (\ref{spectrum-mod}). This finishes the proof of Theorem \ref{thm-mainresult-mod}.

%%%%%%%%%%%%%%%%%%%%%%%%%%%%%%%%%%%%%%%%%%%%%%%%%%%%%%%%%%%%%%
%%%%%%%%%%%%%%%%%%%%%%%%%%%%%%%%%%%%%%%%%%%%%%%%%%%%%%%%%%%%%%
\appendix

\section*{Appendix: Proof of Theorem \ref{thm-spectrumA0=A}}
\numberwithin{equation}{section}
\setcounter{equation}{0}
\renewcommand{\theequation}{A.\arabic{equation}}

Assume that $\lambda \in \sigma(A_0)$ but $\lambda \in \rho(A)$. Hence, for every $f \in {\rm dom}(A)$,
we can write
\begin{equation}
\label{aux-eq}
f = (A-\lambda I)^{-1} (K + A_0 - \lambda I) f,
\end{equation}
where $(A-\lambda I)^{-1} : X \to X$ is a bounded operator. The operator $(A-\lambda I)^{-1} K : X \to X$ is compact
as a composition of bounded and compact operators.
Therefore, the spectrum of $I - (A-\lambda I)^{-1} K$ in $X$ consists of eigenvalues accumulating at $1$.
Therefore, the Fredholm alternative holds: (i) either this operator is invertible for this $\lambda$ with a bounded inverse
or (ii) there exists $f_0 \in {\rm dom}(A)$, $f_0 \neq 0$ such that $f_0 = (A-\lambda I)^{-1} K f_0$.

In the case (i), we can rewrite (\ref{aux-eq}) for every $f \in {\rm dom}(A)$ in the form
\begin{equation}
\label{f-aux}
f = (I - (A-\lambda I)^{-1} K)^{-1} (A-\lambda I)^{-1} (A_0 - \lambda I) f,
\end{equation}
from which we obtain a contradiction against the assumption $\lambda \in \sigma(A_0)$.
Indeed, if $\lambda \in \sigma_{\rm p}(A_0)$, then there exists $f_0 \in {\rm dom}(A_0)$, $f_0 \neq 0$ such that
$(A_0 - \lambda I) f_0 = 0$, in which case equation (\ref{f-aux}) yields that $f_0 = 0$,  a contradiction. On the other hand,
if $\lambda \in \sigma_{\rm r}(A_0)$, then there exists $g_0 \in X$ such that $g_0 \notin {\rm ran}(A_0 - \lambda I)$.
This is in contradiction with (\ref{f-aux}) since for every $g_0 \in X$, there exists a unique $f_0 \in {\rm dom}(A)$
such that
$$
(A-\lambda I) (I - (A-\lambda I)^{-1} K) f_0 = g_0 = (A_0 - \lambda I) f_0.
$$
Finally, if $\lambda \in \sigma_{\rm c}(A_0)$, then for $f \in {\rm dom}(A_0)$ we
let $g := (A_0 - \lambda I) f \in X$ and obtain from (\ref{f-aux}) that
\begin{equation}
\label{f-aux-add}
\| f \|_X = \| (I - (A-\lambda I)^{-1} K)^{-1} (A-\lambda I)^{-1} g \|_X \leq C \| g \|_X,
\end{equation}
for some $C > 0$. Since $\lambda \in \sigma_{\rm c}(A_0)$, we have $\ran(A_0 - \lambda I) = X$ for this $\lambda$
and since $f \in {\rm dom}(A_0)$ is arbitrary, the bound (\ref{f-aux-add}) implies that for every $g \in X$,
$$
\| (A_0 - \lambda I)^{-1} g \|_X \leq C \| g \|_X,
$$
in contradiction with the assumption $\lambda \in \sigma_{\rm c}(A_0)$.

In the case (ii), there exists $f_0 \in {\rm dom}(A)$, $f_0\neq 0$, such that
$f_0 = (A-\lambda I)^{-1} K f_0$, and hence we can rewrite (\ref{aux-eq}) for this $f_0$ as
$$
(A-\lambda I)^{-1} (A_0 - \lambda I) f_0 = 0.
$$
Therefore, we have $(A_0 - \lambda I) f_0 = 0$, and hence $\lambda \in \sigma_{\rm p}(A_0)$,
in contradiction with the assumption that the intersection $\sigma_{\rm p}(A_0) \cap \rho(A)$ is empty.

Thus, if $\lambda \in \sigma(A_0)$, then $\lambda \in \sigma(A)$. Since $A_0 - A = - K$
and the previous argument does not depend on the sign of $K$, the reverse statement is true.
Hence, $\sigma(A) = \sigma(A_0)$.\qed

%%%%%%%%%%%%%%%%%%%%%%%%%%%%%%%%%%%%%%%%%%%%%%%%%%%%%%%%%%%%%%
%%%%%%%%%%%%%%%%%%%%%%%%%%%%%%%%%%%%%%%%%%%%%%%%%%%%%%%%%%%%%%

\end{document}